\newtheorem{theorem}{Theorem}[section]
\newtheorem{lemma}[theorem]{Lemma}
\newtheorem{proposition}[theorem]{Proposition}
\newtheorem{corollary}[theorem]{Corollary}
\newtheorem{conjecture}[theorem]{Conjecture}
\theoremstyle{definition}
\newtheorem{example}[theorem]{Example}
\newtheorem{remark}[theorem]{Remark}
\newcommand{\LC}{\mathcal{L}^c}
\newcommand{\excise}[1]{}
\newcommand{\Id}{\operatorname{Id}}
\newcommand{\id}{\operatorname{id}}
\renewcommand{\dim}{\operatorname{dim}}
\newcommand{\D}{\mathrm{D}}
\renewcommand{\and}{\qquad\text{and}\qquad}
\newcommand{\Hom}{\operatorname{Hom}}
\newcommand{\Z}{\mathbb{Z}}
\newcommand{\Q}{\mathbb{Q}}
\newcommand{\N}{\mathbb{N}}
\newcommand{\R}{\mathbb{R}}
\newcommand{\C}{{\mathbb{C}}}
\newcommand{\cP}{\mathcal{P}}
\newcommand{\cG}{\mathcal{G}}
\newcommand{\cC}{\mathcal{C}}
\newcommand{\cD}{\mathcal{D}}
\newcommand{\cR}{\mathcal{R}}
\newcommand{\cPT}{\mathcal{PT}}
\newcommand{\cPTop}{\mathcal{PT}^\op}
\newcommand{\cRGg}{\mathcal{RG}_{\!g}}
\newcommand{\cRGgop}{\mathcal{RG}_g^\op}
\newcommand{\cRGop}{\mathcal{RG}^\op}
\newcommand{\cRGhop}{\mathcal{RG}_h^\op}
\newcommand{\cODop}{\mathcal{OD}^{\op}}
\newcommand{\cA}{\mathcal{A}}
\newcommand{\cM}{\mathcal{M}}
\newcommand{\la}{\lambda}
\newcommand{\Rep}{\operatorname{Rep}}
\newcommand{\nicktodo}{\todo[inline,color=green!20]}
\newcommand{\Conf}{\operatorname{Conf}}
\newcommand{\Aut}{\operatorname{Aut}}
\newcommand{\cGop}{\mathcal{G}^{\op}}
\newcommand{\op}{{\operatorname{op}}}
\newcommand{\cN}{\mathcal{N}}
\DeclareMathOperator{\FI}{FI}
\DeclareMathOperator{\colim}{colim}
\DeclareMathOperator{\OI}{OI}
\newcommand{\LT}{\operatorname{LT}}
\newcommand{\LQ}{\operatorname{LQ}}
\newcommand{\Tor}{\operatorname{Tor}}
\renewcommand{\a}{\mathbf{a}}
\renewcommand{\phi}{\varphi}
\begin{document}
\spacing{1.2}
\noindent{\LARGE\bf The graph minor theorem in topological combinatorics}\\

\noindent{\bf Dane Miyata}\\
 Department of Mathematics, University of Oregon,
Eugene, OR 97403\\

\noindent{\bf Eric Ramos}\\
Department of Mathematics, Bowdoin College,
Brunswick, ME 04011\\

{\small
\begin{quote}
\noindent {\em Abstract.}
We study a variety of natural constructions from topological combinatorics, including matching complexes as well as other graph complexes, from the perspective of the graph minor category of \parencite{MiProRa}. We prove that these complexes must have universally bounded torsion in their homology across all graphs of bounded genus. One may think of these results as arising from an algebraic version of the graph minor theorem of Robertson and Seymour \parencite{RSXX,RSXXIII}.
\end{quote} }

\section{Introduction}\label{sec:intro}

We study a category $ \cG_{\leq g} $ whose objects are finite connected graphs of genus at most $ g $ and
whose morphisms are built out of automorphisms, deletions, and contractions.  A precise definition of this
category will appear in Section \ref{sec:def}. We then prove finite generations results about the
representation theory of this category and apply these results in topological combinatorics. In particular we
study spaces arising from graphs that behave well with respect to automorphisms, deletions, and contractions.

Throughout this introduction, a \textbf{graph} will refer to an at most one-dimensional CW complex that is
both connected and finite. We say that a graph $G$ is a \textbf{minor} of a graph $G'$ if $G$ can be obtained
from $G'$ by a sequence of edge deletions and contractions. In their seminal series of papers, Robertson and
Seymour proved, among many other things, that the minor relation is actually a well-quasi-order
\parencite{RSXX,RSXXIII}. That is to say, in \emph{any} infinite collection of graphs, there must be a pair
where one is a minor of the other. In this paper, we prove a weakened categorical version of the graph minor
theorem which we outline below.

Fix a nonnegative integer $ g $ and write $\cG_{\leq g} $ for the category whose objects are graphs of genus at most $ g $, and whose morphisms are what are known as \textbf{minor
morphisms} (see Section \ref{sec:def}). While the precise definition of a minor morphism is a bit
technical, for the purposes of this introduction you can think of them as maps built out of edge deletions,
contractions, and graph automorphisms. In particular, one has a minor morphism $\phi:G' \rightarrow G$ if and
only if $G$ is a minor of $G'$. A $\cGop$-module over a Noetherian ring $R$ is then a covariant functor
$M:\cGop \rightarrow R\mbox{-mod}$. Concretely, a $\cGop$-module may be thought of as a collection of
$R$-modules $\{M(G)\}_{G}$, one for each graph $G$, such that whenever $G$ is a minor of $G'$, one has an
induced map $M(G) \rightarrow M(G')$. We say that a $\cGop$-module $M$ is finitely generated if there is some
finite list of graphs $\{G_i\}_{i}$ such that each $ M(G_{i}) $ is a finitely generated $ R $-module and for
any graph $G$, the $R$-module $M(G)$ is spanned by the images of the $M(G_i)$ under the aforementioned maps
induced by the minor relation. 

Our first result follows from results of \cite{PR2} and shows that $ \cGop_{\leq g} $-modules satisfy a {\bf
Noetherian Property} and should be viewed as a weakened categorical version of the graph minor theorem.

\begin{theorem}\label{thm:weakcatgraphminor}
Any submodule of a finitely generated $ \cGop_{\leq g} $-module is itself finitely generated. 
\end{theorem}

This weak categorical graph minor theorem  translates the combinatorics of well-quasi-orders present in the original graph minor theorem to an algebraic statement about submodules of finitely generated modules. While it is the case that the categorical graph minor theorem is equivalent to the original, it is also presented in a language that is more amenable to application in topology and algebra. In particular, see \parencite{PR2}\parencite{PR} for applications of the categorical graph minor theorem to Kazhdan–Lusztig polynomials of graphical matroids, as well as configuration spaces of graphs.  In the following work, we focus our attention to two topics that are important in topological combinatorics -- simplicial complexes and hyperplane arrangements -- in order to illustrate the power of the categorical graph minor theorem.

\subsection{Homology of the matching complex}

For a graph $G$, a \textbf{matching of $G$} is a collection of pairwise non-adjacent edges. The
\textbf{matching complex $\cM(G)$ of $G$} is the simplicial complex whose simplices are in bijection with
matchings on $G$. Matching complexes of complete graphs and complete bipartite graphs have been studied
extensively using a wide variety of techniques including discrete Morse theory. It is notable, however, that there is relatively little known about the matching complexes of general graphs. In this work, we aim to fill this gap in the literature by considering a new perspective on the problem: instead of focusing on $\cM (G)$ for some particular graph $G$, or even for some particular family of graphs, we consider the matching complexes of all graphs at once.

The key observation that allows us to do this is that whenever $G$ is a minor of $G'$ there is a natural way to embed the edges of $G$ into those of $G'$. Moreover, this embedding preserves the condition that the edges are disjoint, as any "undoing" of an edge deletion or contraction can only push things further apart. We therefore obtain simplicial complex maps $\cM (G) \rightarrow \cM (G')$ whenever $G$ is a minor of $G'$, which induce maps on homology $H_i(\cM (G)) \rightarrow H_i(\cM (G'))$. In particular, for any fixed $i \geq 0$, the assignment
\[
G \mapsto H_i(\cM (G))
\]
is a well-defined $\cGop_{\leq g}$-module over $\Z$. Using the categorical graph minor theorem, we will prove the following

\begin{theorem}\label{mainthm1}
The $\cGop_{\leq g}$-module
\[
G \mapsto H_i(\cM (G))
\]
is finitely generated. In particular, there exists an integer $\epsilon_{i,g}$, depending only on $i$ and $g$, such that the torsion part of $H_i(\cM (G))$ is annihilated by $\epsilon_{i,g}$, for all graphs $G$.
\end{theorem}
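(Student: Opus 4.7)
The plan is to realize $G \mapsto H_i(\cM(G))$ as a subquotient of a manifestly finitely generated $\cGop$-module and invoke the Noetherian property from the categorical graph minor theorem; the universal torsion bound then drops out as a general consequence of finite generation over $\Z$.

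For each $j \geq 0$, I would define a $\cGop$-module $C_j$ by letting $C_j(G)$ be the free $\Z$-module on matchings of $G$ of size $j+1$. Since each minor morphism $G' \to G$ produces a simplicial map $\cM(G) \to \cM(G')$, each $C_j$ is indeed a $\cGop$-module and the simplicial differentials assemble into $\cGop$-module maps $\partial_j : C_j \to C_{j-1}$. I would then show that $C_j$ is finitely generated. A matching of size $j+1$ only touches $2(j+1)$ vertices, so given such a matching $\mu$ in a graph $G$, one picks a spanning tree $T$ of $G$, forms the Steiner subtree $T' \subseteq T$ minimally spanning the endpoints of $\mu$, contracts all edges of $T$ lying outside $T'$, deletes the non-matching non-tree edges of $G$, and finally contracts the degree-$2$ Steiner vertices of $T'$. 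The resulting graph $G_0$ is a connected minor of $G$, has vertex and edge count bounded purely in terms of $j$, and carries a matching that maps to $\mu$ under $\cM(G_0) \to \cM(G)$. Since there are only finitely many graphs of bounded size up to isomorphism, this yields a finite generating set for $C_j$. The main obstacle I anticipate is verifying, in the precise framework of \parencite{MiProRa}, that this sequence of elementary operations defines a legitimate minor morphism and that no two endpoints of $\mu$ are inadvertently identified during the contractions; loops and multi-edges produced in $G_0$ will need to be managed with care.

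With both $C_i$ and $C_{i+1}$ finitely generated, the categorical graph minor theorem forces $\ker(\partial_i) \subseteq C_i$ to be finitely generated, and hence so is its quotient $H_i(\cM(-))$. For the torsion claim, I would observe that the torsion submodule $H_i(\cM(-))^{\mathrm{tors}}$ is itself a $\cGop$-submodule, since each transition map is $\Z$-linear and therefore sends torsion to torsion. Noetherianity again supplies a finite list of generating graphs $\{G_k\}$ with each $H_i(\cM(G_k))^{\mathrm{tors}}$ a finite abelian group; letting $\epsilon_i$ be the lcm of their exponents, $\epsilon_i$ annihilates every $H_i(\cM(G_k))^{\mathrm{tors}}$, and the $\Z$-linearity of the transition maps propagates this annihilation to $H_i(\cM(G))^{\mathrm{tors}}$ for every graph $G$, since every torsion class is by construction a $\Z$-linear combination of images of such generators.
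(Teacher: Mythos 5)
Your proposal is correct, and its overall architecture agrees with the paper's: the module structure comes from the edge inclusions $\phi^*$, finite generation of $H_i(\cM(-))$ comes from exhibiting it as a subquotient of a finitely generated $\cGop$-module and invoking the Noetherian property (Theorem \ref{Noeth}), and the torsion bound is exactly the paper's Theorem \ref{boundedTorsion} (torsion submodule, Noetherianity, lcm over finitely many generating graphs). Where you genuinely diverge is in how the finitely generated ambient module is produced. The paper never exhibits generators of the chain module at all: it observes that $C_i(\cM_d(-))$ embeds in a wedge power of the edge module, so $H_i$ is a subquotient of $M_E^{\otimes i}$, and $M_E^{\otimes i}$ is finitely generated by the one-line argument of Lemma \ref{fg_edge} (any $i$-tuple of edges in a graph with more than $i$ edges misses an edge, which can be contracted or deleted) --- the matching condition is simply forgotten. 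You instead keep the matching condition and prove directly that $C_j$ is generated by graphs of size bounded in $j$ via the Steiner-subtree construction; this works, and the worry you flag is the right one and is resolvable: matching edges (having both endpoints among the endpoints $S$ of $\mu$) lie in $T'$ whenever they are tree edges and are never contracted or deleted, and if ``suppressing'' a degree-$2$ Steiner vertex means contracting one of its two incident edges, then each contracted blob contains at most one vertex of $S$, so no two matching endpoints are identified and the pulled-back edges really form a matching of $G_0$; parallel edges created along the way are harmless since the category of \parencite{MiProRa} allows multigraphs. Your route buys explicit, size-bounded generators for $C_j$ (order $j$ vertices and edges), while the paper's route is shorter, avoids all contraction bookkeeping, and generalizes verbatim to $d$-matchings and to arbitrary $\cGop$-monotone graph properties. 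Two small remarks: $C_{i+1}$ is not needed, since $H_i$ is already a subquotient of $C_i$; and once $C_i$ is known to be finitely generated you could simply cite Theorem \ref{boundedTorsion} rather than reproving the torsion step.
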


The statement about torsion in the above theorem should be particularly interesting, as the torsion appearing the homology of the matching complex has been a subject of considerable intrigue in recent years \parencite{ShareshWachs}\parencite{Jonsson3TorChess}\parencite{JonssonMoreTor}.

We also note that, by the very general nature of the weak categorical graph minor theorem, the conclusions of Theorem \ref{mainthm1} will remain true when the matching complex is replaced by a large variety of graph-based complexes, such as those discussed in \parencite{JonssonBook}. See Section \ref{othergraph} for more on this.

\subsection{Graphical linear subspace arrangements}

Recall that, to any graph $G$, one may define a linear subspace arrangement induced by the edge relation of its vertices. We define the complement of this arrangement in $(\C^d)^{V(G)}$ by $\Conf(G,\C^d)$. More precisely, we have
\[
\Conf(G,\C^d) = \{(x_v)_{v \in V(G)} \in (\C^d)^{V(G)} \mid x_v \neq x_w \text{ if $\{v,w\}$ is an edge of
$G$}\}.
\]

For instance, if $G = K_n$ is the complete graph, $\Conf(G,\C^d)$ is the classical configuration space of points in $\C^d$. If instead $G = K_{a,b}$ is the complete bipartite graph, then one recovers the colored configuration spaces $\widetilde{\mathcal{Z}}^D_{a+b}$ as studied by Farb, Wolfson, and Wood \parencite{FWW}. In this work we will specialize to the family of \emph{line graph complements}.

Given a graph $G$, we write $\LC(G)$ to denote the simple graph whose vertices are indexed by the edges of $G$, and whose edges indicate the corresponding edges in $G$ are non-adjacent. These graphs have been called Kneser graphs by some authors \parencite{DaMeuMi}, as the usual Kneser graph $K(n,2)$ is seen to be $\LC(K_n)$.

The same observation made above tells us that whenever $G$ is a minor of $G'$, one obtains a graph embedding
\[
\LC(G) \hookrightarrow \LC(G').
\]
This embedding induces a "forgetful" map 
\[
\Conf(\LC(G'),\C^d) \rightarrow \Conf(\LC(G),\C^d),
\]
which when composed with cohomology yields
\[
H^i(\Conf(\LC(G),\C^d)) \rightarrow H^i(\Conf(\LC(G'),\C^d)).
\]
Our second theorem therefore can be stated as follows.

\begin{theorem}\label{mainthm2}
For any fixed $i,d,g$, the assignment
\[
G \mapsto H^i(\Conf(\LC(G),\C^d))
\]
defines a finitely generated $\cGop_{\leq g}$-module over $\Z$.
\end{theorem}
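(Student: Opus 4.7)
My plan is to produce a cochain model for $H^i(\Conf(\LC(G),\C^d);\Z)$ that is manifestly a finitely generated $\cGop$-module in each degree, and then transfer finite generation to the cohomology itself by invoking the Noetherian property of $\cGop$-modules from \parencite{MiProRa}.

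For any graph $\Gamma$, let $A^\bullet(\Gamma,d)$ denote a combinatorial cochain algebra computing $H^\bullet(\Conf(\Gamma,\C^d);\Z)$, generated in cohomological degree $2d-1$ by a class $\omega_e$ for each edge $e\in E(\Gamma)$: for $d=1$ this is the Orlik-Solomon algebra of the graphic matroid of $\Gamma$, and for $d\geq 2$ it is an integer form of the Cohen-Taylor/F.\,Cohen/Kriz CDGA, subject to the standard Arnold-type relations coming from triangles of $\Gamma$. The model is natural in injective edge-preserving graph morphisms via $\omega_e\mapsto\omega_{\tilde e}$. Specializing to $\Gamma=\LC(G)$, any minor morphism $G\to G_0$ in $\cG$ induces, as described in the introduction, an embedding $\LC(G_0)\hookrightarrow\LC(G)$ and hence a map $A^i(\LC(G_0),d)\to A^i(\LC(G),d)$; thus $G\mapsto A^i(\LC(G),d)$ is naturally a $\cGop$-module.

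Next, observe that any monomial $\omega_{e_1}\cdots\omega_{e_k}$ of degree $i=k(2d-1)$ in $A^i(\LC(G),d)$ involves at most $2k$ vertices of $\LC(G)$, i.e., at most $2k$ edges of $G$. Since $G$ is connected, one can find a connected minor $G_0$ of $G$ containing these support edges, with their edge-disjointness relations intact, and with a uniformly bounded total number of edges (on the order of $i/(2d-1)$): take a minimal connected subgraph of $G$ containing the support edges along with paths linking them, then contract each connecting path to a single edge, arranging that no contraction ever fuses vertices belonging to two distinct support edges. The monomial is then the image of its counterpart in $A^i(\LC(G_0),d)$ under the map constructed above. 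Since there are only finitely many isomorphism classes of connected graphs with $O(i/(2d-1))$ edges, $G\mapsto A^i(\LC(G),d)$ is a finitely generated $\cGop$-module. The categorical graph minor theorem then forces the sub-$\cGop$-module $Z^i\subseteq A^i$ of cocycles to be finitely generated, and hence so is the quotient $H^i=Z^i/B^i$.

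The main obstacle is pinning down the integral model $A^\bullet(\Gamma,d)$ with the required naturality when $d\geq 2$, since the Orlik-Solomon story handles $d=1$ cleanly. For $d\geq 2$ one can either upgrade the rational Kriz CDGA to an integer model (appealing to torsion-freeness of $H^*(\Conf_n(\C^d);\Z)$, known for $d\geq 2$, and extending from complete graphs to arbitrary $\Gamma$) and verify functoriality under subgraph inclusions, or instead invoke the Goresky-MacPherson decomposition of $H^\bullet$ over the intersection poset of the diagonal arrangement: each summand is then visibly supported on a bounded-complexity sub-arrangement, and the same path-contraction argument yields finite generation.
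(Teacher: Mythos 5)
Your architecture is essentially the paper's: present $H^*(\Conf(\LC(G),\C^d);\Z)$ by generators indexed by the edges of $\LC(G)$ (pairs of disjoint edges of $G$) sitting in degree $2d-1$, observe that a minor morphism $\phi\colon G\to G'$ induces $\phi^*\colon E(G')\to E(G)$ preserving disjointness and hence an embedding $\LC(G')\hookrightarrow\LC(G)$ compatible with the presentation, and then reduce finite generation of $H^i$ to finite generation of the module spanned by degree-$i$ monomials. Your bounded-minor argument for that last step is sound: contracting only edges with at least one endpoint not on a support edge never fuses endpoints of two distinct support edges, so each monomial is pulled back from a connected minor with $O(i/(2d-1))$ edges; this is a hands-on substitute for the paper's route, which instead notes that the span of monomials is a quotient of a tensor power of $\Z^{E(\LC(-))}$, itself a submodule of $M_E^{\otimes 2}$, and concludes from Lemma \ref{fg_edge} together with Theorem \ref{Noeth}.

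The genuine gap is the one you flag yourself: for $d\geq 2$ (and integrally) you never actually produce the natural model, and the whole theorem hinges on it. The paper closes exactly this point by citing de Longueville and Schultz \parencite{DeLoungSchultz}, who give a presentation of the \emph{integral cohomology ring itself} for the complement of any arrangement of subspaces of constant codimension: it is an exterior (or symmetric, depending on parity) algebra on classes indexed by the subspaces, i.e.\ by $E(\LC(G))$, modulo relations indexed by the cycles of $\LC(G)$ --- note: all cycles, not just triangles, since $\LC(G)$ is an arbitrary graph and longer circuits contribute relations. Because this presentation lives directly on cohomology, no cochain-level model, no cocycle/coboundary step, and no torsion-freeness input are needed; one only has to check that cycles map to cycles and that orientations can be chosen compatibly, which the inclusion $\phi^*$ provides. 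Your two proposed workarounds do not yet fill the hole: building an integral Kriz/Cohen--Taylor-type model for arbitrary $\Gamma$ and proving its functoriality under subgraph inclusions is precisely the missing content, and the Goresky--MacPherson decomposition is not canonical, so endowing it with a $\cGop$-module structure compatible with the maps induced by minor morphisms would require a separate (nontrivial) naturality argument. So as written the proof is incomplete at its crux, even though everything surrounding that point parallels the paper's argument correctly.
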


One observes that the complete bipartite graph $K_{a,b}$ can (essentially) be realized as the line graph complement of some other graph. In particular, the finite generation result of Theorem \ref{mainthm2} can be seen as a generalization of some results in \parencite{FWW} (see Theorem \ref{basicallyOI}).

The paper is organized as follows. In Section \ref{sec:Grobner} we outline and expand on the theory of
Gr\"obner categories introduced by Sam and Snowden in \cite{SaSno}. In Section \ref{sec:graphs} we define the
category $ \cG_{\leq g} $ and use the results of Section \ref{sec:Grobner} to show that $ \cGop_{\leq g} $ has
a Noetherian property.  Finally in Section \ref{sec:applications} we use the Noetherian property of $
\cGop_{\leq g} $ to study spaces in topological combinatorics arising from graphs. 

\begin{remark}
In previous versions of this work, the above results were all stated without the dependency on the genus parameter $g$. These earlier versions of this paper were based on the Categorical Graph Minor Theorem of \cite{MiProRa}, and worked with the category of all graphs with minor morphisms. Unfortunately, a gap was found in the proof of the main theorem of \cite{MiProRa}, and so for now we can only state our results in the setting where genus is bounded. That being said, it is still very much the belief of the authors that the Categorical Graph Minor Theorem is true, and the techniques of this paper would then imply stronger versions of Theorems \ref{mainthm1} and \ref{mainthm2} where the parameter $g$ does not appear.
\end{remark}

\section*{Acknowledgments}
The first author was supported by NSF grants DMS-1954050, DMS-2039316, and DMS-2053243. The second author was supported by NSF grants DMS-1704811 and DMS-2137628. Both authors would like to send their thanks to Nicholas Proudfoot for his various thoughts and advice related to this project.

\section{Gr\"obner theory of categories}\label{sec:Grobner}

Let $\cC$ be an essentially small category and $k$ a ring.  We define $\Rep_k(\cC)$ to be the category of functors from $\C$
to the category of $k$-modules.
A module $\cM\in\Rep_k(\C)$ is called {\bf finitely generated} if
there exist finitely many objects $c_1,\ldots,c_r$ of $\C$ along with elements $v_i\in \cM(c_i)$ such that, for any object
$c$ of $\C$, $\cM(c)$ is spanned over $k$ by the images of the elements $v_i$ along the maps induced by
all possible morphisms $\varphi_i:c_i\to c$.  If every submodule of $\cM$ is finitely generated, then $\cM$ is said to be {\bf Noetherian}.
If every finitely generated module is Noetherian, the category $\Rep_k(\C)$ is said to be {\bf locally Noetherian}.
Sam and Snowden have developed powerful machinery for proving that module categories are locally Noetherian
which we summarize below.

Given an object $ x $ of $ \C $, let $ \C_x $ be the set of equivalence classes of morphisms out of $ x $
where $ \phi \in \Hom_{\cC}(x,y) $ is equivalent to $ \psi \in \Hom_{\cC}(x,z) $ if there exists an
isomorphism $ \rho \in \Hom_{\cC}(y,z) $ such that $ \rho \circ \phi = \psi $. The category $ \cC $ satisfies
{\bf property (G1)} for exevery object $ x $ of $ \cC $ there exists a linear order $\prec $ on $\cC_{x} $
preserved under post composition. That is to say, if $ \phi, \psi \in \Hom_{\cC}(x,y) $ and $ \phi \prec \psi
$, then for any $ \rho \in \Hom_{\cC}(y,z) $, we have $ h \circ f \prec h \circ g $. There is a natural quasi-order on $
\cC_x $ where we say $ \phi \leq \psi $ if and only if there exists a morphism $ \rho \in \Hom_{\cC}(y,z)
$ such that $ \rho \circ \phi = \psi $. Note, the orders $ \leq $ and $ \prec $ are independent. We say that $
\cC$ satisfies {\bf property (G2)} if $ \leq $ is a well quasi-order on $ \cC_x $. Namely, for any infinite
sequence $ \phi_1, \phi_2, \phi_3,\dots $ of elements of $ \cC_x $, there exists a pair of indicices $ i<j $
such that $ \phi_i \leq \phi_j $. The category $ \cC $ is {\bf Gr\"obner} if $ \cC $ is a directed category and satisfies properties (G1) and
(G2).

Let $ \cC $ and $ \cD $ be categories and $ \Phi : \cD \to \cC $ a functor. The functor $ \Phi $ satisfies {\bf property (F)} if for any object $ x $ of $ \cC $ there exist finitely many objects $ y_1, \dots, y_n $ of
$ \cD $ and morphisms $ \phi_i \in \Hom_{\cC}(x, \Phi(y_i)) $ such that for any object $ y $ of $ \cD $ and
any morphism $ \phi \in \Hom_{\cC}(x, \Phi(y)) $, there exists a morphism $ \psi \in \Hom_{\cD}(y_i, y) $ such
that $ \phi = \Phi(\psi) \circ \phi_i $. The category $ \cC $ is {\bf quasi-Gr\"obner} if there exists a
Gr\"obner category $ \cD $ and a functor $ \Phi : \cD \to \cC $ satisfying property (F).

\begin{theorem}\cite[Theorem 1.1.3]{SaSno}\label{thm:samsno}
If $ \cC $ is a quasi-Gr\"obner category and $ k $ is a Noetherian commutative ring, then $ \Rep_{k}(\cC) $ is locally Noetherian.
\end{theorem}

\subsection{Modules over algebras over categories}\label{sec:modoveralg}
Let $\cC$ be a category equipped with a functor $S:\cC\to\FI$ and let $k$ be a commutative ring.
There is a natural functor from $\cC$ to $k$-algebras taking an object $c$ to the polynomial ring $$\cA_S(c)
:= k[x_e\mid e\in S(c)].$$ Equivalently, we can think of $\cA_S\in \Rep_k(\cC)$ as a module equipped with a
product $\cA_S\otimes \cA_S\to \cA_S$ that is both associative and commutative.  Let $\Rep_k(\cC, S)$ be the
category of modules over $\cA_S$.  Formally, an object of $\Rep_k(\cC, S)$ is an object $\cM\in\Rep_k(\cC)$
along with a multiplication $\cA_S\otimes \cM\to \cM$ such that the two natural maps $\cA_S\otimes
\cA_S\otimes \cM\to \cM$ coincide.  More intuitively, an object $\cM$ of $\Rep_k(\cC, S)$ consists of an
$\cA_S(c)$-module $\cM(c)$ for each object $c$ of $\cC$ and an $\cA(c)$-module map $\cM(c)\to \cM(c')$ for
each morphism $\varphi:c\to c'$, where $\cM(c')$ is an $\cA_S(c)$-module via the ring homomorphism
$\cA_S(c)\to \cA_S(c')$ induced by $\varphi$.

A module $\cM\in\Rep_k(\cC, S)$ is called {\bf finitely generated} if there exist finitely many objects
$c_1,\ldots,c_r$ of $\cC$ along with elements $v_i\in \cM(c_i)$ such that, for any object $c$ of $\cC$,
$\cM(c)$ is spanned over $\cA_S(c)$ by the images of the elements $v_i$ along the maps induced by all possible
morphisms $\varphi_i:c_i\to c$.  If every submodule of $\cM$ is finitely generated, then $\cM$ is said to be
{\bf Noetherian}.  If every finitely generated module is Noetherian, the category $\Rep_k(\cC,S)$ is said to
be {\bf locally Noetherian}. Now, we outline some basic facts about finitely generated modules and Noetherian modules, the proofs of which are completely standard.
Let $\cC$ be a category, $S:\cC\to\FI$ a functor, and $k$ a commutative ring.
For any object $c$ of $\cC$, define the {\bf principal projective}
$\cP_c\in \Rep_k(\cC,S)$ to be the module that takes an object $c'$ to the free $\cA_S(c')$-module spanned by the set $\Hom_{\cC}(c,c')$,
with maps defined via composition.

\begin{lemma}\label{presentations}
A module $\cM\in \Rep_k(\cC,S)$ is finitely generated if and only if there exists
a surjection $$\bigoplus_{i=1}^r \cP_{c_i}\twoheadrightarrow\cM$$
for some list of (not necessarily distinct) objects $c_1,\ldots,c_r$ of $\cC$.
\end{lemma}

\begin{proof}
Suppose that $c_1,\ldots,c_r$ are objects of $\cC$ and $v_i\in \cM(c_i)$ for all $i$.  These classes generate $\cM$
if and only if the map $$\bigoplus_{i=1}^r \cP_{c_i}\to\cM$$
taking $\id_{c_i}\in \cP_{c_i}(c_i)$ to $v_i$ is surjective.
\end{proof}

Recall from the introduction that a module $\cM\in\Rep_k(\cC,S)$ is {\bf Noetherian} if every submodule of $\cM$ is finitely generated.

\begin{lemma}\label{ACC}
A module $\cM$ is Noetherian if and only if every ascending chain of submodules of $\cM$ eventually stabilizes.
\end{lemma}

\begin{proof}
Suppose that $\cM$ is Noetherian and $(\cN_i\mid i\in\N)$ is an ascending
chain of submodules of $\cM$.  Let $\cN := \bigcup_{i\in \N} \cN_i\subset \cM$.
Since $\cM$ is Noetherian, $\cN$ is finitely generated.  If we choose $i$ large enough so that $\cN_i$
contains all of the finitely many generating classes, then we have $\cN_i = \cN$.

Conversely, suppose that $\cM$ has a submodule $\cN\subset \cM$ that is not finitely generated.
We will define an ascending chain of finitely generated submodules $(\cN_i\mid i\in\N)$ as follows.  Let $\cN_0 = 0$.
Once we have defined $\cN_i$, the fact that $\cN_i$ is finitely generated means that $\cN_i \subsetneq \cN$,
so we may choose an object $c_i$ of $\cC$ and an element $v_i\in \cN(c) \setminus \cN_i(c)$.
Let $\cN_i$ be the smallest submodule of $\cN$ containing both $\cN_i$ and $v_i$.  This chain of submodules
clearly does not stabilize.
\end{proof}

\begin{lemma}\label{exact}
Suppose that $0\to\cM'\to\cM\to\cM''\to 0$ is short exact sequence in $\Rep_k(\cC,S)$.  Then $\cM$ is Noetherian
if and only if both $\cM'$ and $\cM''$ are Noetherian.
\end{lemma}

\begin{proof}
If $\cM$ is Noetherian, then $\cM'$ is Noetherian by definition.  If $\cN''\subset \cM''$ is a submodule, let $\cN\subset\cM$
be the preimage of $\cN''$ in $\cM$.  Since $\cM$ is Noetherian, $\cN$ is finitely generated, thus so is $\cN''$ by Lemma \ref{presentations}.

Conversely, suppose that both $\cM'$ and $\cM''$ are Noetherian, and let $(\cN_i\mid i\in\N)$ be an ascending
chain of submodules of $\cM$.  For each $i$, let $\cN_i' := \cN_i\cap \cM'$ and let $\cN_i''$ be the image of $\cN_i$ in $\cM''$.
Since $\cM'$ and $\cM''$ are both Noetherian, Lemma \ref{ACC} tells us that there is an index $n$ such that, for all $i>n$,
$\cN'_i = \cN'_{i+1}$ and $\cN''_i = \cN''_{i+1}$.  We can then conclude that $\cN_i = \cN_{i+1}$ by applying the Five Lemma
to the following diagram:
\[\tikz[->,thick]{
\matrix[row sep=10mm,column sep=20mm,ampersand replacement=\&]{
\node (g) {$0$}; \&\node (a) {$\cN'_i$}; \& \node (b) {$\cN_i$}; \& \node (c) {$\cN''_i$}; \&\node (i) {$0$}; \\
\node (h) {$0$}; \&\node (d) {$\cN'_{i+1}$}; \& \node (e) {$\cN_{i+1}$}; \& \node (f) {$\cN''_{i+1}$}; \&\node (j) {$0$}; \\
};
\draw (g) -- (a) ;
\draw (h) -- (d) ;
\draw (a) -- (b) ;
\draw (b) -- (c) ;
\draw (a) -- (d) node[left,midway]{$=$} ;
\draw (b) -- (e) ;
\draw (c) -- (f) node[right,midway]{$=$}  ;
\draw (d) -- (e) ;
\draw (e) -- (f) ;
\draw (c) -- (i) ;
\draw (f) -- (j) ;
}\]
Thus $\cM$ satisfies the ascending chain condition, and is therefore Noetherian by Lemma \ref{ACC}.
\end{proof}

In the following sections, we build on the work of \cite{SaSno} and define what it means
for the pair  $(\cC, S) $ to be {\bf Gr\"obner} (resp. {\bf quasi-Gr\"obner}) and prove the following generalization of Theorem
\ref{thm:samsno}.

\begin{theorem}\label{thm:with sets}
Let $\cC$ be a category and $S:\cC\to\FI$ a functor.
If the pair $(\cC, S)$ is quasi-Gr\"obner, then $\Rep_k(\cC, S)$ is locally Noetherian for any Noetherian commutative ring $k$.
\end{theorem}

\begin{remark}
Theorem \ref{thm:with sets} is motivated by the work of Nagel and R\"omer \cite{NaRo}.  Though they do not make these definitions
in the same generality, they essentially prove that the pair $(\FI, \id)$ is quasi-Gr\"obner, and they use this result to show that
$\Rep_k(\FI, \id)$ is locally Noetherian for any Noetherian commutative ring $k$.  Moreover, they show that if $S_d:\FI\to\FI$
is the functor taking a set $T$ to the set of unordered $d$-tuples of distinct elements of $T$, then the pair $(\FI, S_d)$
is quasi-Gr\"obner and the category $\Rep_k(\FI, \id)$ is locally Noetherian if and only if $d\leq 1$
\cite[Proposition 4.8]{NaRo}.
\end{remark}

\begin{remark}\label{rmk:both}
Note that we have $\Rep_k(\cC) = \Rep_k(\cC, \emptyset)$,
where $\emptyset:\cC\to\FI$ is the constant functor that takes every object of $\cC$ to the empty set.
We will see that the category $\cC$ is Gr\"obner (resp. quasi-Gr\"obner) in the sense of \cite{SaSno} if and only if the pair $(\cC,\emptyset)$ is quasi-Gr\"obner.
\end{remark}

\subsection{Gr\"obner pairs}\label{sec:grobpairs}
Let $\OI$ be the category whose objects are totally ordered finite sets and whose morphisms are ordered inclusions,
and let $\Psi:\OI\to\FI$ be the functor that forgets the order on a finite set.
Let $\D$ be an essentially small category and $T:\D\to\OI$ any functor.
The purpose of this section is to define what it means for the pair $(\D,T)$ to be Gr\"obner.

A {\bf quartet} for the pair $(\D,T)$ is a quadruple $\mu = (d,d',\varphi,m)$, where $d$ and $d'$ are objects of $\D$,
$\varphi:d\to d'$ is a morphism, and $m:T(d')\to \N$ is a map of sets.
For any morphism $\psi:d'\to d''$ in $\D$, we will write
$T(\psi):T(d')\to T(d'')$ for the induced morphism in $\OI$, and we will write $$\psi(\mu) := (d,d'', \psi\circ\varphi, m_\psi),$$
where $m_\psi$ is determined by the conditions that $m_\psi\circ T(\psi) = m$ and $m_\psi$ is identically zero outside of the image of $T(\psi)$.
For any map $n:T(d')\to \N$, we will write $$\mu+n:= (d,d',\varphi, m+n).$$
If $\mu_1 = (d,d'_1,\varphi_1,m_1)$ and $\mu_2 = (d,d'_2,\varphi_2,m_2)$, we say that $\mu_1\leq \mu_2$
if there exists a morphism $\psi:d_1'\to d_2'$ and a map $n:T(d'')\to N$ such that $\mu_2 = \psi(\mu_1) + n$.

\begin{remark}
The motivation for these definitions is that, once we choose a commutative ring $k$, the quartet $\mu$ determines a monomial
$$x^m := \prod_{a\in T(d')} x_a^{m(a)} \in \cR(d')$$
along with an element
$$b_\mu := x^m\cdot \varphi \in \cP_d(d')\in\Rep_k(\D,\Psi\circ T).$$
Then $\mu_1\leq\mu_2$ if and only if $\varphi_2$ factors through $\varphi_1$ via a map $\psi$ and
we have $$b_{\mu_2} = x^n \psi(b_{\mu_1})$$
for some monomial $x^n \in \cA(d'_2)$.
\end{remark}

We say that $\mu_1$ and $\mu_2$ are {\bf equivalent} 
if $\mu_1\leq \mu_2\leq \mu_1$.
For each object $d$ of $\D$, let $|\D_d^T|$ denote the poset of equivalence classes of quartets with first coordinate $d$.
Given a quartet $\mu = (d,d',\varphi,m)$, we will write $[\mu]$ to denote its equivalence class in $|\D_d^T|$.
A well-order $\prec$ of $|\D_d^T|$ is called {\bf admissible} if, given two quartets
$\mu_1 = (d,d',\varphi_1,m_1)$ and $\mu_2 = (d,d',\varphi_2,m_2)$ with the same source and target
along with a morphism $\psi:d'\to d''$ and a map $n:T(d'')\to \N$, we have
$$[\mu_1]\prec [\mu_2] \;\;\Longrightarrow \;\; [\psi(\mu_1) + n] \prec [\psi(\mu_2) + n].$$

We say that the pair $(\D,T)$ satisfies {\bf property (G1)} if, for every object $d$ of $\D$, the poset $|\D_d^T|$
admits an admissible well-order.
A poset $P$ is said to be {\bf Noetherian} if, for any sequence $(p_i\mid i\in\N)$ in $P$, there exist natural numbers $i<j$ such that $p_i\leq p_j$.
We say that the pair $(\D,T)$ satisfies {\bf property (G2)} if, for every object $d$ of $\D$, the poset $|\D_d^T|$
is Noetherian.
The category $\D$ is said to be {\bf directed} if, for any object $d$ of $\D$, the only morphism from $d$ to $d$ is the identity.
We call the pair $(\D,A)$ {\bf Gr\"obner} if $\D$ is directed and $(\D,A)$ satisfies properties (G1) and (G2).

\begin{remark}
Property (G1) for the pair $(\D,\emptyset)$ is equivalent to property (G1) for $\D$ as defined in
\cite[Section 1.1]{SaSno},
and similarly
property (G2) for the pair $(\D,\emptyset)$ is equivalent to property (G2) for $\D$.
Thus a directed category $\D$ is Gr\"obner in the sense of \cite{SaSno} if and only if the pair $(\D, \emptyset)$ is Gr\"obner.
\end{remark}

The following Proposition says that the functor $T$ does not add anything interesting to property (G1).  In other words,
the distinction between a Gr\"obner category and a Gr\"obner pair lies entirely in the property (G2).

\begin{proposition}\label{G1 is lame}
The pair $(\D,T)$ satisfies property (G1) if and only if the pair $(\D,\emptyset)$ satisfies property (G1).
\end{proposition}

\begin{proof}
If $\prec$ is an admissible order of $|\D_d^T|$, then restriction to quartets with $m=0$ gives an admissible order of $|\D_d^\emptyset|$.
Conversely, if we have an admissible order of $|\D_d^\emptyset|$, we can compare the classes of two quartets
$\mu_1 = (d,d'_1,\varphi_1,m_1)$ and $\mu_2 = (d,d'_2,\varphi_2,m_2)$ for $(\D,T)$ by first comparing the classes
of the quartets $(d,d'_1,\varphi_1,0)$ and $(d,d'_2,\varphi_2,0)$ for $(\D,\emptyset)$ and then, if they are equal, breaking the tie by
comparing $m_1$ and $m_2$ lexicographically.
\end{proof}

\subsection{Gr\"obner bases}
Let $\D$ be an essentially small category and $T:\D\to\OI$ a functor such that the pair $(\D,T)$ is Gr\"obner,
and choose an admissible well-order $\prec$ of $|\D_d^T|$ for each object $d$ of $\D$ as in the definition of property (G1).
For any pair of objects $d$ and $d'$ in $\D$, let $Q_{d,d'}$ be the set of quartets of the form
$\mu = (d,d',\varphi,m)$.  The fact that $\D$ is directed implies that the natural map from $Q_{d,d'}$ to
$|\D_d^T|$ is injective, thus $Q_{d,d'}$ is well-ordered by $\prec$.

Fix a commutative ring $k$, so that we may define the representation category $\Rep_k(\D,\Psi\circ T)$.
For any nonzero element $$p\;\; = \sum_{\mu \in Q_{d,d'}}\la_\mu b_\mu\;\; \in\;\;\cP_d(d'),$$ we define the {\bf leading quartet}
$\LQ(p)$ to be the maximal $\mu$ with respect to the well-order $\prec$ such that the coefficient $\la_\mu\in k$ is nonzero.  If $\mu = \LQ(p)$,
we define 
the {\bf leading term} $\LT(p) := \la_\mu b_\mu \in \cP_d(d')$ and
the {\bf leading coefficient} $\LC(p) := \la_\mu \in k$.

\begin{lemma}\label{admissibility}
Suppose we have a morphism $\psi:d'\to d''$, a map $n:T(d')\to \N$, and an element $0\neq p\in \cP_d(d')$.
Then $$\LT(x^n \psi(p)) = x^n \psi(\LT(p)).$$
\end{lemma}

\begin{proof}
This is precisely the definition of admissibility of the well-order $\prec$.
\end{proof}

Given a submodule $\cN\subset\cP_d$, we define a subset $$\LQ(\cN) := \left\{[\LQ(p)]\;\big{|}\; 0\neq p\in \cN(d')\right\}\subset |\D_d^T|.$$
For each object $d'$ of $\D$, we define
$$\LT(\cN)(d') := \{0\} \cup \left\{\LT(p)\mid 0\neq p\in \cN(d')\subset \cP_d(d')\right\}.$$
For each quartet $\mu = (d,d',\varphi,m)$, we define the ideal
$$\LC(\cN,\mu) := \{0\} \cup \left\{\LC(p)\mid 0\neq p\in\cN(d')\;\text{and}\;\LQ(p) = \mu\right\}\subset k.$$
Lemma \ref{admissibility} implies that $\LT(\cN)\subset\cP_d$ is a submodule and
that we have an inclusion of ideals $\LC(\cN,\mu_2)\subset\LC(\cN,\mu_1)$ whenever $[\mu_1]\leq [\mu_2]$.

Suppose we are given a finite set $B = \left\{(d_1',p_1),\ldots,(d_r',p_r)\right\}$ of pairs with $0\neq p_i\in \cN(d_i')$ for all $i$.
We say that $B$ is a {\bf Gr\"obner basis} for $\cN$ if the module $\LT(\cN)$
is generated by the classes $\LT(p_i)$ for $1\leq i\leq r$.

\excise{
Fix a finite set $B$ as above, not necessarily a Gr\"obner basis for $\cN$.
Suppose that $0\neq p \in \cP_d(d')$ and $0\neq q\in \langle B\rangle(d')$ have the property that $\LT(p) = \LT(q)$.
In this case, we say that $p$ is {\bf reducible} and that $p-q$ is a {\bf reduction} of $p$.
If we have a finite sequence of elements $$p = q_0,q_1,\ldots,q_n = r\in\cP_d(d')$$ such that $q_{i+1}$
is a reduction of $q_i$ for all $i$ and $r$ is not reducible, then we say that $r$
is a {\bf remainder} of $p$.

\begin{lemma}\label{TFAE}
The following are equivalent:
\begin{enumerate}
\item $B$ is a Gr\"obner basis for $\cN$.
\item For every object $d'$ of $\D$ and every nonzero element $p\in\cN(d')$, $p$ is reducible.
\item For every object $d'$ of $\D$ and every element $p\in\cN(d')$, 0 is a remainder of $p$.
\end{enumerate}
\end{lemma}

\begin{proof}
The equivalence of the first two statements is tautological
\nicktodo{Not quite.}
and the fact that
statement 3 implies statement 2 is trivial, so we only need to show that statement 2 implies statement 3.
We will prove statement 3 by induction on $\LQ(p)$.  Assume for the sake of contradiction
that there exists an element $p\in\cN(d')$ that does not have 0 as a remainder, and choose such a $p$
with $\LQ(p)$ minimal.  Since $p$ is reducible, there exists $q\in \langle B\rangle(d')\subset \cN(d')$
with $\LT(p) = \LT(q)$.  Since $p$ does not have 0 as a remainder, neither does $p-q$,
which contradicts the minimality of $\LQ(p)$.
\end{proof}
}

\begin{lemma}\label{generates}
If $B$ is a Gr\"obner basis for $\cN$, then $B$ generates $\cN$.
\end{lemma}

\begin{proof}
If not, choose an element $p \in\cN(d')$ that is not in the submodule generated by $B$, and choose it in such a way that
the leading quartet $\LQ(p)$ is minimal with respect to the admissible well-order on $|\D_d^T|$.
Since $B$ is a Gr\"obner basis, we may choose an index $i$, a morphism $\psi:d_i'\to d'$,
a function $n:T(d')\to \N$, and a scalar $\la\in k$ such that $\LT(p) = \la x^n \psi(\LT(p_i))$.
By Lemma \ref{admissibility}, this is equal to $\LT(\la x^n \psi(p_i))$.
But then $p - \la x^n \psi(p_i)$ is not in the submodule generated by $B$ and has a leading quartet strictly smaller than $\LQ(p)$,
which gives a contradiction.
\end{proof}

\excise{
For any submodule $\cN\subset\cP_d$ and quartet $\mu = (d,d',\varphi,m)$, let
$$\LC(\cN,\mu) := \{0\} \cup \left\{\LC(p)\mid 0\neq p\in\cN(d')\right\} \subset k.$$
Note that $\LC(\cN,\mu)$ is an ideal in $k$, and if $\mu_1 \leq \mu_2$, then $\LC(\cN,\mu_1)\subset\LC(\cN,\mu_2)$.
}

\begin{proposition}\label{noetherian projectives}
Suppose that the ring $k$ is Noetherian.
For every object $d$ of $\D$, the principal projective $\cP_d\in\Rep_k(\D,\Psi\circ T)$ is Noetherian.
\end{proposition}

\begin{proof}
By Lemma \ref{generates}, it is sufficient to show that every submodule $\cN\subset\cP_d$ has a Gr\"obner basis.
By property (G2), the set $\LQ(\cN)\subset |\D_d^T|$ has only finitely many minimal elements with respect to the partial
order.  Choose finitely many quartets $\mu_1,\ldots,\mu_r$
representing these minimal classes, and write
$\mu_i = (d,d_i',\varphi_i, m_i)$.
For each $i$, the fact that $k$ is Noetherian implies that the ideal
$\LC(\cN,\mu_i)$ is generated by finitely many elements $$\la_i^1,\ldots,\la_i^{s_i}\in k.$$
For each $j\leq s_i$, choose an element $0\neq p_i^j\in \cN(d_i')$ with $\LT(p_i^j) = \la_i^j b_{\mu_i}$,
and let $$B := \{(d_i', p_i^j)\mid 1\leq i\leq r,\; 1\leq j\leq s_i\}.$$
We claim that $B$ is a Gr\"obner bases for $\cN$.

Let $0\neq p\in\cN(d')$ be given; we will show that $\LT(p)$ is in the submodule of $\cP_d$ generated by the classes $\LT(p_i^j)$.
Let $\nu := \LQ(p)$.  By definition of the quartets $\mu_1,\ldots,\mu_r$, there exists an index $i$ such that $[\mu_i]\leq [\nu]$.
That means that we can choose a morphism $\psi:d_i'\to d'$ and a map $n:T(d')\to \N$ such
that $\nu = \psi(\mu_i) + n$.  Since $[\mu_i]\leq [\nu]$, we have $\LC(\cN, \nu)\subset \LC(\cN, \mu_i)$, and therefore
there exist elements
$\zeta_i^1,\ldots,\zeta_i^{s_i}\in k$ such that $$\LC(p) = \zeta_i^1\la_i^1 + \cdots + \zeta_i^{s_i}\la_i^{s_i}.$$
Then
\begin{eqnarray*}
\LT(p) &=& \LC(p) b_\nu\\
&=& (\zeta_i^1\la_i^1 + \cdots + \zeta_i^{s_i}\la_i^{s_i}) b_{\psi(\mu_i) + n}\\
&=& x^n \psi(\zeta_i^1 \la_i^j b_{\mu_i} + \cdots + \zeta_i^{s_i} \la_i^j b_{\mu_i})\\
&=& x^n \psi\left(\zeta_i^1 \LT(p_i^1) + \cdots + \zeta_i^{s_i} \LT(p_i^{s_i})\right)
\end{eqnarray*}
is in the submodule of $\cP_d$ generated by the classes $\LT(p_i^j)$.
\end{proof}

\begin{corollary}\label{thm:grob}
Let $\D$ be an essentially small category, $T:\D\to\OI$ a functor, and $k$ a Noetherian commutative ring.
If the pair $(\D, T)$ is Gr\"obner, then $\Rep_k(\D, \Psi\circ T)$ is locally Noetherian.
\end{corollary}

\begin{proof}
Suppose that $\cM\in \Rep_k(\D, \Psi\circ T)$ if finitely generated.  By Lemma \ref{presentations}, $\cM$ is a quotient
of a direct sum of principal projectives.  Proposition \ref{noetherian projectives} tells us that each of these principal
projectives is Noetherian, and Lemma \ref{exact} then tells us that the same is true of $\cM$.
\end{proof}

\subsection{Quasi-Gr\"obner pairs}
Let $\Phi:\D\to\cC$ be a functor.
Following Sam and Snowden \cite[Definition 3.2.1]{SaSno}, we say that $\Phi$ has {\bf property (F)}
if, for any object $c$ of $\cC$, there exist finitely many objects $d_1,\ldots,d_r$ of $\D$ along with morphisms $\varphi_i:c\to \Phi(d_i)$
such that, for any object $d$ of $\D$ and any morphism $\psi:c\to \Phi(d)$, there exists an $i$ and a morphism $\rho:d_i\to d$
such that $\psi = \Phi(\rho)\circ  \varphi_i$.
Given a functor $S:\cC\to\FI$, we say that the pair $(\cC,S)$
is {\bf quasi-Gr\"obner} if there exists a Gr\"obner pair $(\D,T)$ and an essentially surjective functor
$\Phi:\D\to\cC$ with property (F) such
that $S\circ \Phi$ is naturally isomorphic to $\Psi\circ T$.

\begin{remark}
The pair $(\cC, \emptyset)$ is quasi-Gr\"obner if and only if the category $\cC$ is quasi-Gr\"obner in the sense
of \cite{SaSno}.
\end{remark}

Let $\Phi:\D\to\cC$ and $S:\cC\to\FI$ be any functors.  For any commutative ring $k$, we have an exact functor
$\Phi^*:\Rep_k(\cC,S)\to \Rep_k(\D,S\circ\Phi)$ that takes a module $\cM\in\Rep_k(\cC,S)$ to
$$\Phi^*\cM := \cM\circ\Phi \in\Rep_k(\D,S\circ\Phi).$$

\begin{proposition}\label{prop:pullback}
Let $\Phi:\D\to\cC$ be a functor with property $F$, let $S:\cC\to\FI$ be any functor, and let $k$ be a commutative ring.
If $\cM\in\Rep_k(\cC,S)$ is finitely generated, then $\Phi^*\cM\in\Rep_k(\D,S\circ\Phi)$ is finitely generated.
\end{proposition}

\begin{proof}
Since $\cM$ is finitely generated, Lemma \ref{presentations} tells us that $\cM$ is a quotient of a direct sum of principal projectives.
Since $\Phi^*$ is exact, $\Phi^*\cM$ is a quotient of a direct sum of pullbacks of principal projectives.
Thus, it is sufficient to show that, for any object $c$ of $\cC$, $\Phi^*\cP_c$ is finitely generated.
Choose finitely many objects $d_1,\ldots,d_r$ of $\D$ along with morphisms $\varphi_i:c\to \Phi(d_i)$
as in the definition of property (F).  Consider the maps
$$\cP_{d_i} \to \Phi^*\cP_{\Phi(d_i)}\to\Phi^*\cP_c,$$
where the first map is induced by $\Phi$ and the second is induced by $\varphi_i$.  Property (F) says precisely
that the direct sum map $$\bigoplus_{i=1}^r \cP_{d_i}\to \Phi^*\cP_c$$
is surjective, which implies that $\Phi^*\cP_c$ is finitely generated.
\end{proof}

\begin{proof}[Proof of Theorem \ref{thm:with sets}.]
Let $(\cC,S)$ be quasi-Gr\"obner pair.
That means that there exists a Gr\"obner pair $(\D,T)$, an essentially surjective
functor $\Phi:\D\to\cC$ with property (F), and a natural isomorphism $\Psi\circ T \cong S\circ\Phi$.
Fix a commutative ring $k$, a finitely generated module $\cM\in\Rep_k(\cC,S)$, and a submodule $\cN\subset\cM$.
We need to prove that $\cN$ is finitely generated, as well.

Proposition \ref{prop:pullback} tells us that $\Phi^*\cM\in \Rep_k(\D,S\circ\Phi)\simeq \Rep_k(\D,\Psi\circ T)$
is finitely generated, and Corollary \ref{thm:grob} then implies that $\Phi^*\cN\subset\Phi^*\cM$ is also
finitely generated.
Choose a generating set consisting of objects $d_1,\ldots,d_r$ of $\D$ and elements $v_i\in \Phi^*\cN(d_i)$.
This means that, for any object $d$ of $\D$, $\Phi^*\cN(d)$ is spanned over $\cA(d)$ by the images of the elements $v_i$
along the maps induced by all possible morphisms $\varphi_i:d\to d_i$.
This is equivalent to saying the $\cN(\Phi(d))$ is spanned over $\cA(\Phi(d))$ by the images of the elements $v_i\in\cN(\Phi(d_i))$
along the maps induced by all morphisms $\Phi(\varphi_i):\Phi(d)\to\Phi(d_i)$.  Since $\Phi$ is essentially surjective, this means that
$\cN$ is finitely generated.
\end{proof}

\section{Graphs}\label{sec:graphs}
We define the category $ \cG_{\leq g} $ and use the results of Section \ref{sec:Grobner} to prove
Noetherianity results about the representation theory of this category.  

\subsection{Defining the graph categories}\label{sec:def}
A {\bf directed graph} is a quadruple $(V,A,h,t)$, where $V$ and $A$ are finite sets (vertices and arrows),
and $h$ and $t$ are each maps from $A$ to $V$ (head and tail).  A {\bf graph} is a quintuple $(V,A,h,t,\sigma)$,
where $(V,A,h,t)$ is a directed graph and $\sigma$ is a fixed-point-free involution of $A$ with the property that $h = t\circ\sigma$.
If $(V,A,h,t,\sigma)$ is a graph, elements of the quotient $A/\sigma$ are called {\bf edges}.
Given a directed graph $D = (V,A,h,t)$, we define the {\bf underlying graph} $\bar D = (V,\bar A,h,t,\sigma)$, where
$\bar A = A \times\{\pm 1\}$, $h(a,1) = h(a) = t(a,-1)$, $t(a,1) = t(a) = h(a,-1)$, and $\sigma$ acts by toggling the second factor.
We will usually suppress $h$ and $t$ from the notation and simply write $(V,A,\sigma)$ for a graph.

\begin{remark}
This might seem to be an unnecessarily complicated definition of a graph.  For example, one might try defining
a graph to consist of a vertex set, and edge set, and a map from edges to unordered pairs of vertices.  However,
we want a graph with a loop to have a nontrivial automorphism
that reverses the orientation of the loop.  It is difficult to formalize this with the unordered pair definition.
\end{remark}

If $G = (V,A,\sigma)$ is a graph and $v,v'\in V$, a {\bf walk} in $G$ from $v$ to $v'$ is a finite sequence $(a_1,\ldots,a_n)$
of arrows with $t(a_1)=v$, $h(a_n)=v'$, and $h(a_i)=t(a_{i+1})$ for all $1\leq i< n$.  A {\bf path} in $G$ from $v$ to $v'$ is a walk from $v$ to $v'$
of minimal length.  We say that $G$ is {\bf connected}
if there exists at least one path between any pair of vertices, and we say that $G$ is a {\bf forest} if there exists at most one
path between any pair of vertices.  A nonempty connected forest is called a {\bf tree}.

Let $G = (V,A, \sigma)$ and $G' = (V',A', \sigma')$ be graphs.  A {\bf minor morphism} from $G$ to $G'$ is
a map $$\varphi:V\sqcup A\sqcup\{\star\} \to V'\sqcup A'\sqcup\{\star\}$$ satisfying the following properties:
\begin{itemize}
\item $\varphi(\star) = \star$.
\item For every vertex $v\in V$, $\varphi(v) \in V'$.
\item For every arrow $a \in A$, $\varphi\circ \sigma(a) = \sigma' \circ \varphi(a)$ where $ \sigma' $
	acts trivially on $ V' \sqcup \{\star\} $. 
\item For every arrow $a'\in A'$, there exists a unique arrow $a\in A$ with $\varphi(a) = a'$.
\item If $\varphi(a) \in A$, then $\varphi\circ h(a) = h'\circ\varphi(a)$ and $\varphi\circ t(a) =
	t'\circ\varphi(a)$.
\item If $\varphi(a) \in V$, then $\varphi\circ h(a) = \varphi(a) = \varphi\circ t(a)$.
\item For every $v'\in V'$, $\varphi^{-1}(v')$ consists of the edges and vertices of a tree.
\end{itemize}
The edges that map to vertices are called {\bf contracted edges} and the edges that map to $\star$ are
called {\bf deleted edges}. Note that the
second condition and third conditions imply that the edges of $G$ that are neither deleted nor contracted map bijectively
to the edges of $G'$.  In particular,
$\varphi$ induces an injection $\varphi^*:A'/\sigma' \to A/\sigma$.

\begin{remark}\label{restriction to edges}
If $G$ and $G'$ are connected graphs, then a minor morphism $\varphi:G\to G'$ is determined by its restriction
to the set of arrows of $G$.  (Connectedness is necessary because the graph with two vertices and no arrows
has a nontrivial automorphism swapping the vertices.) However, it is convenient to define $\varphi$ on the
whole set $V\sqcup A\sqcup\{\star\}$ so that minor morphisms can be composed simply by composing functions.
Note that $\varphi$ is {\em not} determined by the map on edges $\varphi^*$.  To see this, consider the
example where $G$ has two vertices and two parallel edges between them, and $G'$ consists of a single vertex
with no edges.  There are two minor morphisms from $G$ to $G'$, corresponding to the choice of which edge is
deleted and which edge is contracted.
\end{remark}

Let $\cG$ denote the category whose objects are nonempty connected graphs and whose morphisms are minor morphisms.

\begin{conjecture}\label{conj:catgraphminor}
The category $ \cGop $ is quasi-Gr\"obner.
\end{conjecture}

\begin{remark}\label{rmk:conjecture}
The Gr\"obner cover of $ \cGop $ should be the category $ \cODop $ of directed graphs whose arrows are ordered
with opposite minor morphisms that preserve the order of arrows. That is if $ \phi: (V,A,h,t) \to
(V',A',h',t') $ is a minor morphism of directed graphs whose arrows are ordered, then we also require that
whenever $ a_1' \leq a_2' $ in $ A' $, then $ \phi^*(a_1') \leq \phi^*(a_2') $ in $ A $. However, to prove
that $ \cODop $ satisfies property (G2) one would need a stronger version of Robertson and Seymour's labeled
graph minor theorem \cite[1.7]{RSXXIII} where the order of the labels on edges is preserved in the above
sense.  If Conjecture \ref{conj:catgraphminor} is proven, then all of the results in Section
\ref{sec:applications} can be upgraded from statements about graphs with genus at most $ g $ to statements
about all graphs with no restriction on genus. 
\end{remark} 

In light of Remark \ref{rmk:conjecture}, we restrict our attention to a family of full subcategories of $
\cGop $. The {\bf combinatorial genus} of a graph $ G = (V,A,\sigma) $ is $ |A/\sigma| -|V| + 1 $. For any $ g
\geq 0$, let $ \cG_{\leq g} $ be the full subcategory of $ \cG $ whose objects are non-empty
connected graphs of
genus at most $ g $. 

\begin{theorem}\label{thm:weakgraphminor}
For any positive integer $ g $, $ \cGop_{\leq g} $ is quasi-Gr\"obner. 
\end{theorem}

The proof of Theorem \ref{thm:weakgraphminor} relies on the work in \cite{PR2} which we summarize
here. 

A {\bf rooted tree} is a pair $ (T,v) $ where $ T $ is a tree and $ v $ is a vertex of $ T $ called the
{\bf root}. There is a natural partial order on the vertex set of rooted tree where $ w \leq u $ if and only
if the unique path from $ w $ to the root passes through $ u $. A {\bf direct descendant} of a vertex $ w $ is
a vertex covered by $ w $ with respect to this partial order. A {\bf planar rooted tree} is a rooted tree
equipt with a linear order on the set of direct descendants of each vertex. Note that this gives a depth
first linear order on the set of vertices.   

Let  $ L $ be a finite set.  An
{\bf $L$-labeled planar rooted tree} is a triple $ (T,v, \ell) $ where $ (T,v) $ is a rooted tree and $
\ell $ is a function from the set of vertices of $ T $ to $ L $. 

In a nonempty connected graph $ G $, a {\bf spanning tree} for $ G $ is a subgraph that is a tree and contains
all vertices of $ G $. Now, for each genus $ g $, fix once and for all a graph $ R_g $ with one vertex and $ g
$ loops. Define a {\bf rigidified graph} of genus $ g $ to be a quadruple $ (G,T,v, \tau) $ where  $ G $ is a
graph of genus $ g $, $ (T,v) $ is a planar rooted spanning tree of $ G $, and $ \tau : G \to R_g$ is a minor
morphism where the contracted edges are exactly the edges of $ T $. Note that rigidified graphs come equipt
with a linear order on the vertices coming from their planar rooted spanning trees. A {\bf morphism of
rigidified graphs} $ (G, T, v, \tau) \to (G', T', v', \tau ') $ between rigidified graphs of genus $ g $ is a
minor morphism $ \phi : G \to G' $ that restricts to a minor morphism $ T \to T' $ such that $ \phi(v) = v' $,
and if $ w' \leq u' $ under linear order on vertices of $ G' $, then the smallest vertex in the preimage of $
w' $ comes before the smallest vertex in the preimage on $ u' $ under the linear order on the vertices of $ G
$. Note that a minor morphism between graphs of the same genus necessarily has no deleted edges.

Let $ \cRGg $ be the category whose objects are rigidified graphs of genus $ g $ and whose morphisms are minor
morphisms of rigidified graphs of genus $ g $.
Let $ \cRGop_{\leq g} $ denote the category whose objects are rigidified graphs of genus at most $ g $ and
whose morphisms are minor morphisms of rigidified graphs between graphs of the same genus. One can think of $
\cRGop_{\leq g}$ as a kind of disjoint union of the finitely many categories $ \cRGhop $ for $ h \leq g $.

\begin{theorem}[\cite{PR2}] 
For any $ g \geq 0 $, the category $ \cRGgop $ is Gr\"obner.
\end{theorem}

\begin{corollary}\label{cor:leqgrob}
For any $ g \geq 0 $, the category $ \cRGop_{\leq g} $ is Gr\"obner. 
\end{corollary}
\begin{proof}
It is clear from the definitions that $ \cRGop_{\leq g} $ satisfies properties (G1) and (G2) since $
\cRGhop $ satisifies both properties for each of the finitely many $ h \leq g $.
\end{proof}

Note that there is a forgetful functor \[
\Phi_{\leq g} : \cRGop_{\leq g} \to \cGop_{\leq g} \]  
which sends a rigidified graph to its underlying graph and a minor morphism of rigidified graphs to the underlying
minor morphism of graphs.

\begin{lemma}\label{lem:propertyF}
The functor $ \Phi_{\leq g} $ satisfies property (F).
\end{lemma}

\begin{proof} This proof is similar to the proof of \cite[Lemma
	3.11]{PR2}. The fact that $ \Phi_{\leq g} $ is essentially surjective is clear as every graph can be given a rigidified graph structure.

Let $ G $ be a graph of genus $ h \leq g $ with exactly $ n $ edges. To show $ \Phi_{\leq g} $ has property (F), we
need to produce a finite collection $ (G_i, T_i, v_i , \tau_i) $ of Rigidified graphs of genus at most $ g $
along with minor morphisms  $\phi_{i} : G_i \to G $ such that, given any rigidified graph $ (G',
T', v', \tau')$ and minor morphism $ \phi: G' \to G $, there exists an index $ i $ and a morphism of
rigidified graphs $ \rho: (G', T', v', \tau') \to (G_i, T_i, v_i, \tau_i)$ with $ \phi = \phi_{i} \circ
\Phi_{\leq g}(\rho) $. 

Consider all possible rigidified graphs with at most $ n + 2g - h $ edges and genus at least $ h $ and at most
$ g $. For each isomorphism class of such rigidified graph, choose a representative $ (G'', T'', v'', \tau'')
$ and add it to our collection $ (G_i, T_i, v_i, \tau_i) $ as many times as there are minor morphisms $ G''
\to G $. Then, take our collection of minor morphisms $ \phi_i $ so that for each
$ i $, every
possible minor morphism $ G_i \to G $ appears in our collection. Note, there are only finitely many isomorphism classes of rigidified graphs with at most $ n+2k-h $
in $ \cRGhop $ and for such a rigidified graph there are only finitely many minor morphisms from the
underlying graph to $ G $. Thus our collection of rigidified graphs and minor morphisms is indeed finite. 

Now, fix a rigidified graph $ (G', T', v', \tau') $ with genus $ k $ where $ h \leq k \leq g $ and a minor
morphism $ \phi: G' \to G $. Let $ E' $ be the set of edges that are contracted under $ \phi $. In a morphism
of rigidified graphs, we are only allowed to contract edges in the designated spanning tree. Thus, let $ \rho
$ be the morphism of rigidified graphs  $ \rho:(G' , T', v', \tau') \to
(G' / (E' \cap T'), T' / (E' \cap T'), v', \tau')$ corresponding to contracting the edges of $ E' \cap T' $.
Clearly $ \phi $ factors through $ \Phi_{\leq g}(\rho) $. All that we must verify is that $ (G'/(E' \cap T'), T' /
(E' \cap T'), v', \tau') $ is a member of our collection. In otherwords, we must show that $ G' /(E' \cap T')
$ has at most $ n + 2g - h $ edges. Let $ n' $ be the number of edges of $ G' $. Since deleting an edge lowers
the genus of the graph by 1, we know that under $ \phi $, exactly $ k-h $ edges are deleted. Thus, $ n' = n +
k-h+|E'|$ or equivalently, \[
|E'| = n' - (n + k - h).
\] Furthermore, we know that the number of edges in $ T' $ is equal to $ n' - k $. Thus, \[
|E' \cap T'| \geq n' - (n + k - h) - k = n' - n -2k + h. 
\]
This implies that the number of edges in $ G' / (E' \cap T') $ is at most \[ 
n' - (n' - n - 2k + h) = n + 2k + h
\] which is no more than $ n + 2g + h $ since $ k \leq g $. 
\end{proof}

\subsection{Edge functors}

Recall that given a minor morphism of graphs $ \phi: (V,A,\sigma) \to (V',A',\sigma') $, we get an inclusion
of edge sets $ \phi^* : A'/\sigma' \to A/\sigma $. Thus, we have an {\bf edge functor} 
\[E: \cGop_{\leq g} \to \FI \]
taking a graph $ G $ to its set of edges. Similarly, the edges of a rigidified graph are totally ordered and
given minor morphism of rigidified graphs, the pullback inclusion on edge sets preserves this order. This
gives an ordered edge functor 
\[\Omega : \cRGop_{\leq g} \to \OI \] 
taking a rigidified graph to its ordered set of
edges. Recall from Section \ref{sec:grobpairs} that $ \Psi: \OI \to \FI $ is the forgetful functor that sends
an ordered set to its underlying (unordered) set.

\begin{lemma}\label{lem:natiso}
$ \Psi \circ \Omega  $ and $ E \circ \Phi_{\leq g} $ are naturally isomorphic functors $ \cRGop_{\leq g} \to \FI
$.
\end{lemma}
\begin{proof}
This is clear from the definitions of the functors.
\end{proof}

\begin{theorem}\label{thm:grobpair}
The pair $ (\cRGop_{\leq g} , \Omega) $ is Gr\"obner. 
\end{theorem}

To prove Theorem \ref{thm:grobpair} we will need a labeled version of a planar rooted tree. This is a slightly
more general version of the notion of an $ S $-labeled planar rooted tree in \cite{PR2}. Let $ L $ be a set equipt with a well quasi order $ \leq $. An $ L $-labeled planar rooted tree is a triple $ (T,v,\ell) $ where $
(T,v) $ is a planar rooted tree and $ \ell $ is a function from the set of vertices of $ T $ to $ L $. An
{\bf $L$-labeled minor morphism of planar rooted trees} or {\bf $L $-labeled minor morphism} $ (T,v,\ell) \to (T', v', \ell') $ is a minor morphism $
\phi: (T,v) \to (T',v') $ of rigidified graphs of genus 0 (note planar rooted trees are exactly the rigidified
graphs of genus 0) such that $ \ell'(w') \leq \ell'(\max \phi^{-1}(w')) $ where $ \max \phi^{-1}(w') $ is the
first vertex in the preimage of $ w' $ under $ \phi $ with respect to the natural depth first order on the
vertices of $T$. Let $ \cPT_{L} $ denote the category whose objects are $ L $-labeled planar rooted trees and
whose morphisms are $L$-labeled minor morphisms. For a fixed $ L $-labeled planar
rooted tree $ (T,v,\ell) $, we may give a quasi oreder $ \leq $ to the $L$-labeled minor morphisms into $ (T,v,\ell) $.
Namely, if $ \phi': (T',v',\ell') \to (T,v,\ell) $ and $ \phi'': (T'', v'', \ell'') \to (T,v,\ell) $ are
$L$-labeled minor morphisms, then $ \phi \leq \phi'' $ if and only if  there exists an $ L $-labeled minor
morphism $ \phi: (T'',v'',\ell'') \to (T',v',\ell') $ such that $ \phi'' = \phi' \circ \psi $. Let $
|(\cPTop_{L})_{(T,v,\ell)}| $ denote the poset of equivalence classes of $ L $-labeled minor morphisms into
$ (T,v,\ell) $ under this quasi order.

\begin{lemma}\label{lem:labeledkruskal}
	Let $ L $ be a well quasi ordered set. We partially order the isomorphism classes of $ L $-labeled planar rooted
	trees where $ [(T', v' , \ell')] \leq [(T, v, \ell)] $ if and only if there is a minor morphism of $ L
	$-labeled planar rooted trees $ (T, v, \ell) \to (T', v', \ell')$. 
\end{lemma}

\begin{corollary}\label{cor:labeledkruskal}
	Let $ L $ be a well quasi ordered set. Fix an $ L $-labeled planar rooted tree $ (T,v,\ell) $. The poset $
	|(\cPTop_{L})_{(T,v,\ell)}| $ is a well partial order.
\end{corollary}

\begin{remark}
Note that the notion of $ L $-labeled planar rooted tree and $ L
$-labeled minor morphism are slight generalizations of the notions of an $ S $-labeled planar rooted tree and
an $ S $-labeled contraction defined in \cite{PR2} where the set of labels was an (unordered) finite set.
Nevertheless, Lemma \ref{lem:labeledkruskal} and Corollary \ref{cor:labeledkruskal} are analogues of and have nearly identical proofs to \cite[Theorem 3.6]{PR2} and \cite[Corollary 3.7]{PR2}
respectively. Thus, we omit their proofs here.
\end{remark}

Recall from Section \ref{sec:grobpairs} that a quartet $\mu = (R,R',\phi, m) $ for the pair $ (\bigsqcup_{h\leq g} \cRGhop , OE) $ consists of
two rigidified graphs of genus at most $ g $, $R = (G,T,v,\tau) $, $R'= (G',T',v',\tau') $, a minor morphism
of rigidified graphs $ \phi : R' \to R $, and a map of sets $ m $ from the edges of $ R' $ to $ \N $. We
should think of $ m $ as assigning to each edge of $ R' $ a natural number. For any
fixed rigidified graph $ R$, We also have a quasi order on quartets whose first coordinate is $ R $ where $
(R,R', \phi' , m') \leq (R, R'', \phi'', m'') $ if there exists a minor morphism of rigidified graphs $ \psi :
R'' \to R' $ such that $ \phi'' = \phi' \circ \psi $ and if $ e' $ is an edge in $ R' $, $ m'(e') \leq
m''(\phi^*(e')) $ where $ \phi^* $ is the natural inclusion of the edges of $ R' $ to the edges of $ R'' $. 
Denote the corresponding poset of equivalence classes under this quasi order by $ |(\cRGop_{\leq g})^{OE}_{R}|
$.

Fix a rigidified graph $ R = (G, T,v, \tau) $ with genus $ h \leq g $. Now for a suitable choice of $ L $, we wish to encode each quartet with first coordinate $ R $ as an $ L
$-labeled planar rooted tree so that the poset $ |(\cRGop_{\leq g} )^{OE}_{R} | $ is equivalent to the poset $
| (\cPTop_{L})_{(T,v,\ell)}| $ where $ (T,v) $ is the planar rooted tree of $ R $ and $ \ell $ is some suitable
label. To this end, let $ L = (\N \cup \{\star\})^{2h} \times \N $. The order on $ (\N \cup \star) $ comes
from the usual order order on $ \N $ along with setting $ \star $ to be incomparable to all elements of $ \N $. Then,
the order on $ L $ is the usual order on the cartesian product of posets.

Given a quartet of the form $ (R, R', \phi, m) $, where $ R' = (G', T', v', \tau') $ the corresponding $ L
$-labeled planar rooted tree will be of the form $ (T',v', \ell') $ for some labeling $ \ell' $. Note that $ R' $ must also have genus $
h$ since we only have morphisms of rigidified graphs between rigidified graphs of the same genus. Thus, $ R' $
has $ h $ extra edges not in $ T' $. The planar rooted structure of $ (T', v') $ gives an orientation and
ordering to these $ h $
extra edges (we orient them from smaller to larger vertex and then order them by the order on their
terminating vertex). Call these extra edges $ e'_1, \dots, e'_h $.  For each $ 1 \leq i \leq h $, let $
w'_{2i-1} $ be the vertex at which $ e_i $ originates and let $ w'_{2i} $ be the vertex at which $ e_i $
terminates. Then for each vertex $ w'$ of $ T' $, and each $ 1 \leq j \leq 2h $, define the $ j $th coordiante
of $ \ell'(w') $ to be $ m(e'_j) $ if $ w \geq w_j $ and $ \star $ otherwise. Then for each edge $ e' $ in $ T'
$, if $ w' $ is the vertex of $ e' $ further from the root, set the last coordinate of $ w' $ to $ m(e')
 $. Finally set the last coordinate of $ \ell'(v') $ to 0. The intuition behind this labeling is that the
 first $ 2h $ coordinates encode the location and weights given by $ m $ for the $ h $ edges not in $ T' $.
 The last coordinate encodes the weights given by $ m $ of the edges in $ T' $. 
 Finally, let $ \ell $ be the fixed labeling of $ (T,v) $ corresponding to the quartet $ (R,R, \Id_{R}
 , 0) $.

 \begin{lemma}\label{lem:labels1}
 Let $ (R, R', \phi, m) $ be a quartet and $ (T', v',\ell' ) $ be the corresponding $ L $-labeled planar
 rooted tree as defined above. Then $ \phi $ induces an $ L $-labeled morphism $ \phi_L : (T',v', \ell') \to
 (T,v,\ell) $.
 \end{lemma}
 \begin{proof}
 One simply uses a nearly identical argument to the one given in the proof of Lemma \cite[Lemma 3.8]{PR2}.
 \end{proof}

\begin{lemma}\label{lem:labels2}
Let $ \mu' = (R, R', \phi', m') $ and $\mu'' =  (R, R'',\phi'', m'') $ be quartets with corresponding $ L
$-labeled planar rooted trees $ (T',v',\ell') $ and $ (T'', v'', \ell'') $ and define $ \phi'_L $ and $ \phi''_L
$ as in Lemma \ref{lem:labels1}. Then $ \mu' \leq \mu''$ if and
only if $\phi'_L \leq \phi_L''$ in $|(\cPTop_{L})_{(T,v,\ell)}|$
\end{lemma}
\begin{proof}
Note that $ \mu' \leq \mu'' $ means there exists a minor morphism of rigidified graphs $ \psi: R'' \to R' $
such that $ \phi'' = \phi' \circ \psi $ and for any edge $ e' $ of $ R' $, $ m'(e') \leq m''(\psi^*(e')
) $. 
Using a nearly identical argument to that given in the proof of \cite[Lemma 3.8]{PR2} we see that such a minor morphism $ \psi $
is equivalent to an $L$-labeled minor morphism of $ L $-labeled planar rooted trees $ \psi_L : (T'', v'', \ell'') \to
(T',v',\ell') $ such that $ \phi''_L = \phi'_L \circ\psi_L $.  This is equivalent to saying $ \phi'_L \leq
\phi''_L $.  
\end{proof}

\begin{proof}[proof of Theorem \ref{thm:grobpair}]
That the pair $ (\cRGop_{\leq g} , \Omega) $ satisfies property (G1) follows from Corollary \ref{cor:leqgrob} and
Proposition \ref{G1 is lame}. The fact that $ (\cRGop_{\leq g} , OE) $ satisifies property (G2) follows from
Lemma \ref{lem:labels2} and Corollary \ref{cor:labeledkruskal}.
\end{proof}

\begin{theorem}\label{thm:weakgraphminorpairs}
The pair $ (\cGop_{\leq g} , E) $ is quasi Gr\"obner
\end{theorem}
\begin{proof}
This follows from Lemmas \ref{lem:propertyF} and \ref{lem:natiso}.
\end{proof}

\subsection{Finite generation}\label{fgcor}

Theorems \ref{thm:weakgraphminor} and \ref{thm:weakgraphminorpairs} tell us that the categories of
representations $\Rep_{k}(\cGop_{\leq g}) $ and $ \Rep_{k} (\cGop_{\leq g} , E) $ are locally Noetherian. That
is, every submodule of any finitely generated module of either of these categories, is itself finitely
generated.

The properties of finitely generated $\cGop$-modules fall into two broad categories: global and local. Global properties are those which universally bound or otherwise restrict algebraic behaviors present in the constituent modules $M(G)$. Local properties, on the other hand, are those that can be observed when one limits their attention to the modules $M(G)$, where $G$ ranges within certain natural families of graphs. We note that the study of the category $\cGop$ and its representations is still fairly new, and thus the following list should by no means seen as exhaustive. We believe that future study into understanding the inner mechanisms of finitely generated $\cGop$-modules is a very interesting direction for future research.

To begin, we have the following, which is essentially just a reformulation of the definition of finite generation.

\begin{theorem}
Let $M$ be a finitely generated $\cGop_{\leq g}$ Then there exists a non-negative integer $N$ such that for all graphs $G$ one has
\[
M(G) = \colim_{G' < G} M(G'),
\]
where the colimit ranges over all proper minors $G'$ of $G$ that have no more than $N$ edges.
\end{theorem}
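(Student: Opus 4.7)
Plan: The plan is to use the definition of finite generation directly.  Choose a finite generating set $G_1, \dots, G_n$ for $M$; by the Noetherian property (Theorem \ref{Noeth}), the kernel of the resulting surjection $\bigoplus_{i} P_{G_i} \to M$ is also finitely generated, say by graphs $H_1, \dots, H_m$.  Set $N = \max_{i,j}\{|E(G_i)|, |E(H_j)|\}$, so that $M$ admits a finite presentation
\[
\bigoplus_{j} P_{H_j} \to \bigoplus_{i} P_{G_i} \to M \to 0
\]
by principal projectives at graphs with at most $N$ edges.

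Surjectivity of the natural map $\colim_{G' < G} M(G') \to M(G)$ will be the easy half.  Whenever $|E(G)| > N$, any minor morphism $\phi \colon G \to G_i$ must contract or delete at least one edge, so $G_i$ is a proper minor of $G$ with at most $N$ edges and hence $(G_i, \phi)$ lies in the indexing category.  Finite generation then implies that every element of $M(G)$ is a sum of images of such maps, so the colimit map is surjective.

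For the universal property I first treat the principal projective case.  For $P_H$ with $|E(H)| \leq N$ and $|E(G)| > N$, every basis element $b_\phi \in P_H(G)$ (for $\phi \colon G \to H$) admits a canonical representative $(b_{\id_H}, (H, \phi))$ in the colimit; any other representation $(b_\psi, (G', \alpha))$ with $\psi \circ \alpha = \phi$ is identified with this canonical one via the indexing morphism $\psi \colon (G', \alpha) \to (H, \phi)$, since applying $P_H$ to $\psi$ sends $b_{\id_H}$ to $b_\psi$.  This yields the colimit formula for each $P_H$, and therefore for direct sums of principal projectives.  The general case follows because colimits commute with cokernels.  The main obstacle is precisely this injectivity analysis: surjectivity is essentially a repackaging of finite generation, but confirming that the colimit has no extra relations requires the factorization argument in the projective case, after which right-exactness of colimits transports the result through the finite presentation to $M$ itself.
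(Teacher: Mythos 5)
You should know that the paper offers no argument for this statement at all: it is introduced as ``essentially just a reformulation of the definition of finite generation,'' so your write-up is necessarily a different, and in fact more substantial, route --- and it is the right one. Your proof is correct for graphs $G$ with more than $N$ edges, and its extra ingredient is genuinely needed: finite generation alone yields only the surjectivity of the comparison map $\colim_{G' < G} M(G') \to M(G)$, while injectivity really uses the Noetherian property through your choice of $N$ bounding a full presentation. Indeed, if $N$ only bounds the generators the conclusion can fail: take $G_0$ a single edge, $H_0$ a large graph, $K \subseteq P_{G_0}$ the submodule generated by some nonzero $k \in P_{G_0}(H_0)$, and $M = P_{G_0}/K$; any $G'$ with at most one edge admits no minor morphism to $H_0$, so $M(G') = P_{G_0}(G')$, and the colimit over such proper minors $G' < G$ computes $P_{G_0}(G)$, which surjects onto but does not equal $M(G)$ whenever $G$ has $H_0$ as a minor. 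Your two steps --- the identification $\colim_{G'<G} P_H(G') \cong P_H(G)$ for $|E(H)| \le N < |E(G)|$ (the colimit is spanned by the canonical classes of $b_{\id_H}$ at the objects $(H,\phi)$, and these map bijectively onto the basis $\{b_\phi\}$ of $P_H(G)$, which gives injectivity as well as surjectivity), followed by right-exactness of colimits applied to the finite presentation $\bigoplus_j P_{H_j} \to \bigoplus_i P_{G_i} \to M \to 0$ --- are both sound, with naturality of the comparison maps supplying the commuting squares needed to compare cokernels. The one caveat concerns the statement rather than your argument: as literally phrased (``for all graphs $G$,'' with \emph{proper} minors) it fails for small graphs, e.g.\ for $M = P_G$ evaluated at $G$ itself; both your proof and the paper's intended reading (that the presentation of $M(G)$ standardizes once $G$ has sufficiently many edges) concern the case $|E(G)| > N$.
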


One may view this theorem as stating that given a finitely generated $\cGop_{\leq g}$-module $M$, the
presentation of $M(G)$ becomes standardized once $G$ has sufficiently many edges. Our next result relates with
how fast $\cGop_{\leq g}$-modules can grow. For the following statement, we write $e(G)$ for the number of edges of $G$, and $v(G)$ for the number of vertices.

\begin{theorem}\label{bound}
Let $M$ be a finitely generated $\cGop_{\leq g}$-module over a field $k$, and assume that the generators of $M$ have no more than $N$ edges, for some non-negative integers $N$. Then there exists a polynomial $P \in \Q[x,y]$ of degree at most $N$ such that,
\[
\dim_k M(G) \leq \tau(G)P(e(G),v(G)),
\]
where $\tau(G)$ is the number of spanning trees of $G$.
\end{theorem}

\begin{proof}
For simplicity during this proof, write $g(G) = e(G)-v(G)+1$. It will suffice to prove the theorem in the case where $M$ is the principal projective module $P_{G'}$, for some fixed graph $G'$. In this case, for any graph $G$, the dimension of $M(G)$ is precisely the number of minor morphisms from $G$ to $G'$. Any such morphism, up to an automorphism of $G'$, can be determined in the following way: First one picks a spanning tree of $G$ in which all of the contractions will take place. One then chooses $g(G')$ edges not to delete outside of this spanning tree, and $e(G')-g(G')$ edges to not contract within the spanning tree. Therefore,
\[
\dim_k M(G) \leq |\Aut(G')| \tau(G) \binom{g(G)}{g(G')}\binom{v(G)-1}{e(G')-g(G')}.
\]
This then implies,
\[
\dim_k M(G) \leq |\Aut(G')| \tau(G) g(G)^{g(G')}(v(G)-1)^{e(G')-g(G')}
\]
as desired.
\end{proof}

\begin{remark}
The bound of Theorem \ref{bound} is an improvement of a bound found in \cite{PR2}, and can be seen to be sharp. Consider the principal projective module $P_{\bullet},$ over the graph with no edges. In this case, a minor morphism from a graph $G$ to the graph $\bullet$ is precisely determined by a choice of spanning tree for $G$. In particular,
\[ 
\dim_k P_{\bullet}(G) = \tau(G)
\]

The example of the dimension growth of $P_{\bullet}$ is also notable, as it illustrates just how complicated
computing the dimensions in finitely generated $\cGop_{\leq g}$-modules can be. In other words, while one can
in principal compute $\dim_k(P_{\bullet}(G))$ for any graph $G$ using, for instance, the Matrix Tree Theorem,
this is fairly non-trivial counting. Moreover, $P_\bullet$ is the simplest of the principal projective
modules, and this is not even to mention the kinds of behaviors present in the dimensions of the submodules of
principal projective modules. It is for this reason, as we shall see, that it is often times more
mathematically fruitful to consider what we call the local properties of $\cGop_{\leq g}$-modules.
\end{remark}

Our final global property of finitely generated $\cGop_{\leq g}$-modules relates with the kinds of torsion that can appear in the modules $M(G)$.

\begin{theorem}\label{boundedTorsion}
Let $M$ be a finitely generated $\cGop_{\leq g}$-module over $\mathbb{Z}$. Then there exists a non-negative integer $\epsilon$ such that for all graphs $G$ of genus at most $g$, the torsion part of $M(G)$ has exponent at most $\epsilon$.
\end{theorem}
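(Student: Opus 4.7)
The plan is to apply Theorem \ref{Noeth} to the ``torsion subfunctor'' of $M$, thereby reducing a uniform bound across all graphs to the existence of a common annihilator for finitely many generators.

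First, I would define $M_{\mathrm{tors}} \subset M$ by setting $M_{\mathrm{tors}}(G)$ equal to the torsion subgroup of $M(G)$. Because the structure maps $M(\phi)$ are $\Z$-linear and $\Z$-linear maps send torsion elements to torsion elements, this prescription assembles into a genuine $\cGop$-submodule of $M$. Since $M$ is finitely generated, Theorem \ref{Noeth} then guarantees that $M_{\mathrm{tors}}$ is finitely generated as well. Thus I may choose finitely many graphs $H_1, \ldots, H_m$ and elements $x_j \in M_{\mathrm{tors}}(H_j)$ whose images span every $M_{\mathrm{tors}}(G)$ as an abelian group. By the definition of a $\cGop$-module, each $M(H_j)$ is a finitely generated $\Z$-module, so its torsion part is a finite abelian group and each $x_j$ is killed by some positive integer $\epsilon_j$. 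Setting
\[
\epsilon = \operatorname{lcm}(\epsilon_1, \ldots, \epsilon_m)
\]
gives the candidate bound.

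To finish, consider any graph $G$ and any $y \in M_{\mathrm{tors}}(G)$. By finite generation of $M_{\mathrm{tors}}$, one may write $y = \sum_k r_k \, M(\phi_k)(x_{j_k})$ with $r_k \in \Z$ and $\phi_k \in \Hom_{\cG}(G, H_{j_k})$. Since $\epsilon_{j_k} \mid \epsilon$ one has $\epsilon x_{j_k} = 0$, and $\Z$-linearity of each $M(\phi_k)$ then gives $\epsilon y = 0$. So the torsion part of $M(G)$ has exponent at most $\epsilon$, uniformly in $G$.

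The main ``obstacle'' here is essentially cosmetic: once one trusts the categorical graph minor theorem, the only non-formal step is recognizing that the pointwise torsion subgroup is actually a $\cGop$-submodule, which is immediate from $\Z$-linearity of morphisms. All of the genuine content is packaged into Theorem \ref{Noeth}; the argument is a textbook example of how a Noetherian property converts a uniform-bound question into a finiteness question about generators.
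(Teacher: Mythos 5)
Your proposal is correct and follows essentially the same argument as the paper: form the torsion subfunctor, note it is a $\cGop$-submodule, invoke Theorem \ref{Noeth} to get finite generation, and take $\epsilon$ to be the least common multiple of the annihilators at the finitely many generators. No gaps or differences worth noting.
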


\begin{proof}
	If $ t \in M(G) $ is a torsion element, then for any minor morphism $ \phi: G' \to G $,
	the induced map $ M(G) \to M(G') $ must send $ t $ to a torsion element. Thus, we have a $
	\cGop_{\leq g}$-submodule  $ T $ of $ M  $, where $ T $ sends a graph $ G $ to the torsion part of $
	M(G) $.  By assumption $M(G)$ is finitely generated, so Theorem
	\ref{thm:weakcatgraphminor} tells us that $ T $ is also finitely generated, by say the graphs $ G_1 , \dots , G_{n} $.
	We may take $ \epsilon $ to be the least common multiple of the annihilators of $ T(G_1), \dots, T(G_n) $.
\end{proof}

Moving on, we next consider two local consequences of finite generation. Our first such result describes the growth of the modules associated to families of graphs obtained through sprouting and subdividing.

\begin{theorem}[Corollaries 4.5 and 4.7, \cite{PR2}]\label{basicallyOI}
Let $M$ be a finitely generated $\cGop_{\leq g}$-module over a field $k$, and let $G$ be a fixed graph, with a distinguished collection of vertices $v_1,\ldots,v_r$, and edges $e_1,\ldots,e_s$. We write $G^{(n_1,\ldots,n_r)}$ for the graph obtained from $G$ by attaching $n_i$ leaves to the vertex $v_i$, and $G_{(m_1,\ldots,m_s)}$ for the graph obtained from $G$ by subdividing the edge $e_j$, $m_j$ times. Then there exist polynomials $P_1(n_1,\ldots,n_r)$ and $P_2(m_1,\ldots,m_s)$ such that
\begin{align*}
\dim_k M(G^{(n_1,\ldots,n_r)}) &= P_1(n_1,\ldots,n_r)\\
\dim_k M(G_{(m_1,\ldots,m_s)}) &= P_2(m_1,\ldots,m_s),
\end{align*}
for all vectors $(n_1,\ldots,n_r)$ and $(m_1,\ldots,m_s)$ whose each component is sufficiently large.
\end{theorem}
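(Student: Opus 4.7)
My plan is to pull back $M$ along natural functors from simpler indexing categories, and then invoke classical Hilbert polynomial results for modules over those categories.

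For the leaf-attaching part, define a functor $F_1 : \FI^r \to \cGop$ sending a tuple of finite sets $(S_1,\ldots,S_r)$ to the graph $G^{(|S_1|,\ldots,|S_r|)}$, where the elements of $S_i$ label the leaves attached to $v_i$. A componentwise injection in $\FI^r$ is sent to the minor morphism that contracts each leaf lying outside the image of the injection. Analogously, for the subdivision part, define $F_2 : \OI^s \to \cGop$ sending $(T_1,\ldots,T_s)$ to the graph obtained from $G$ by subdividing $e_j$ into a path indexed by the linearly ordered set $T_j$, with an order-preserving injection $T_j \hookrightarrow T_j'$ sent to the minor morphism contracting the excess subdivision edges. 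Precomposition with $M$ yields an $\FI^r$-module $M \circ F_1$ and an $\OI^s$-module $M \circ F_2$.

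The key step is to show that these pulled-back modules are themselves finitely generated in their respective categories. Granting this, the classical Hilbert polynomial theorems for finitely generated $\FI^r$-modules (following \parencite{CEF2015} and its multivariable generalizations) and for $\OI^s$-modules yield the existence of polynomials $P_1$ and $P_2$ that agree with the respective dimension functions for all sufficiently large values of the parameters. To prove finite generation of the pullbacks, I would use the following combinatorial observation: since $M$ is finitely generated, there is a uniform bound $N$ on the sizes of its generating graphs. Consider a minor morphism $\phi$ from $G^{(n_1,\ldots,n_r)}$ to a generator $H$ of $M$. The tree condition on fibers $\phi^{-1}(w)$, combined with the bounds $|V(H)|, |E(H)| \leq N$, forces all but boundedly many leaves at each $v_i$ to be contracted: a non-contracted leaf either maps injectively to a distinct edge of $H$ or is deleted with its free endpoint occupying a dedicated target vertex of $H$, and both options are in bounded supply. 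It follows that any such $\phi$ factors as a leaf-contraction morphism $G^{(n_1,\ldots,n_r)} \to G^{(n_1',\ldots,n_r')}$ followed by a minor morphism $G^{(n_1',\ldots,n_r')} \to H$, where each $n_i'$ is bounded by a constant depending only on $M$. This exhibits $M \circ F_1$ as finitely generated as an $\FI^r$-module, and the analogous argument handles the subdivision case once one observes that a long subdivision path must be almost entirely contracted by any morphism into a bounded target.

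The main obstacle is the factorization step above. One must verify that the ``core part'' of a minor morphism into a bounded graph is determined by only a bounded amount of combinatorial data, independent of the growing parameters $n_i$ or $m_j$. The critical input is the tree condition on each fiber $\phi^{-1}(w)$, which simultaneously caps the number of admissible leaf deletions and forces long subdivision paths to be contracted almost in their entirety in order to fit into the bounded target. Making this uniform bound precise, while carefully tracking the interaction between core and non-core edges under minor morphisms, is the technical heart of the argument.
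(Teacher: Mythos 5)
This theorem is not proved in the paper at all --- it is imported from \parencite{PR2} --- and the argument there is in the same spirit as yours: compare the restriction of $M$ to the families $G^{(n_1,\ldots,n_r)}$ and $G_{(m_1,\ldots,m_s)}$ with finitely generated modules over products of copies of $\FI$ and $\OI$, and then quote eventual (multivariate) polynomiality of Hilbert functions for such modules. Your handling of the sprouting case is correct and complete in outline: edges that are neither contracted nor deleted inject into $E(H)$, a deleted leaf forces (by connectivity of fibers) its free endpoint to be an entire fiber and hence to consume a vertex of $H$, and since contractions can always be peeled off the front of a minor morphism, every $\phi\colon G^{(n)}\to H$ factors through an $F_1$-structure map to some $G^{(n')}$ with each $n_i'$ bounded by $|E(H)|+|V(H)|$. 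That is exactly what finite generation of $M\circ F_1$ requires, because the values of $M$ are finite dimensional and there are finitely many bounded multidegrees.

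The one step where your stated justification is not yet an argument is the subdivision bound, which you correctly identify as the crux; note that the leaf-style reasoning does not transfer verbatim, since a deleted path edge does not by itself occupy a vertex of $H$ (its endpoints have degree $2$ and could a priori sit in larger fibers). The right count uses the degree-$2$ structure of the subdivision vertices: any maximal run of contracted path edges lying strictly between two non-contracted path edges can meet the rest of the graph only through those two edges, so it is an entire fiber and requires its own vertex of $H$; hence the number of non-contracted path edges over each $e_j$ is at most $|V(H)|+1$, and the same peeling-off argument gives the factorization through some $G_{(m')}$ with $m'$ bounded. (When $\phi$ contracts the whole path over $e_j$, peel off all but one path edge; the second factor then contracts $e_j$ itself and is not an $\OI^s$-structure map, which is harmless since only the first factor needs to be one.) Two smaller repairs: fix a convention, say absorbing each omitted subdivision vertex into its left neighbour, so that $F_2$ is an honest functor; and since $k$ is an arbitrary field, make sure the polynomiality inputs for finitely generated $\FI^r$- and $\OI^s$-modules are quoted in characteristic-free form (the characteristic-zero representation-stability arguments of \parencite{CEF2015} do not suffice as stated). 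With these points filled in, your proof is sound and agrees in spirit with the cited one.
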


Through subdivision one can, for instance, study $M(G)$ as $G$ ranges within the cycle graphs. This approach was particularly useful in the case of Kazhdan-Lusztig polynomials of graphical matroids (see \cite{PR2}). Later, we will relate a space considered by Farb Wolfson and Wood to sprouting on the two vertices of a single edge.

Our final local property relates with restriction to the family of trees. Recall that a \textbf{Dyck-path} is a properly nested set of parentheses. Equivalently, it is a word in the symbols $1$ and $-1$ such that all partial sums beginning from the start of the word are never negative. To each Dyck-path $w$, one may associate a (rooted and planar) tree as follows: reading $w$ from left to right, draw a new edge going upward each time a left parenthesis is read, while you backtrack down the nearest edge whenever a right parenthesis is read. For instance, the Dyck-path $(()())$ is associated to the tree that looks like the letter $Y$. Importantly, this association is not one-to-one -- For instance, $()()()$ is also associated to the graph that looks like the letter $Y$ -- though every tree arises in this way\footnote{Note however that the association is one-to-one if we instead consider rooted trees with a cyclic order on the edges moving away from the root for every vertex}. We write $T(w)$ for the tree associated to the Dyck-path $w$. We also write $r(w)$ for the the number of left parenthesis in the Dyck-path $w$ -- i.e. half of its length.

\begin{theorem}[Theorem 1, cite{Ram}]
Let $M$ be a finitely generated $\cGop_{0}$-module over a field $k$. Then the generating function
\[
HD_M(t) := \sum_{w} \dim_k(M(T(w)))t^{r(w)},
\]
where the right hand sum is over all Dyck-paths, is algebraic.
\end{theorem}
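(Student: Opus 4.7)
The plan is to model the restriction of $M$ to rooted plane trees by a finite bottom-up tree automaton, then use the context-free grammar structure of Dyck paths together with the Chomsky--Sch\"utzenberger theorem to conclude that $HD_M(t)$ satisfies a polynomial system and is therefore algebraic.

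I would first handle the case $M = P_{G'}$, where $\dim_k P_{G'}(T(w))$ is simply the number of minor morphisms $T(w) \to G'$. Such a morphism classifies each edge of $T(w)$ as contracted, deleted, or mapped to a specific edge of $G'$, subject to incidence constraints imposed by $G'$ and with the fiber over each vertex of $G'$ a subtree. Using the canonical rooting coming from the Dyck path $w$, I would associate to each vertex of $T(w)$ a \emph{state} recording its image in $V(G') \cup \{*\}$ together with the status of the edge to its parent. Since $G'$ is a fixed finite graph, this state space $S$ is finite, and the local consistency conditions at each vertex define a bottom-up tree automaton whose accepting runs biject with minor morphisms $T(w) \to G'$. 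Translating via the Dyck-path grammar $D \to \varepsilon \,\mid\, (D)D$ yields a finite polynomial system in the generating functions $\{F_s(t)\}_{s \in S}$, and algebraicity of $HD_{P_{G'}}(t) = \sum_{s \text{ accepting}} F_s(t)$ follows from the Chomsky--Sch\"utzenberger theorem (equivalently, from the classical fact that components of a formal power series solution of a proper positive polynomial system are algebraic).

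For a general finitely generated $M$, I would begin from a presentation $0 \to K \to \bigoplus_i P_{G_i} \to M \to 0$ in which $K$ is finitely generated by Theorem \ref{Noeth}. The short exact sequence gives $HD_M(t) = HD_{\bigoplus_i P_{G_i}}(t) - HD_K(t)$, and since algebraic power series form a field, algebraicity would transfer provided one could propagate it through such sequences. The main obstacle is that iterating this reduction need not terminate: we do not a priori have finite global dimension for $\cGop$. The cleanest way around this is to bypass the resolution and build the bottom-up tree automaton directly for $M$, with states that encode the local morphism data modulo the finitely many relations witnessed by a generating set of $K$. Verifying that this enlarged-then-quotiented state space is still finite -- not just the state space for the cover $\bigoplus_i P_{G_i}$ -- is the crux of the argument, and this is where the Noetherian property is used to bound the amount of relational data that must be tracked.
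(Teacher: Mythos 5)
Your first step---counting minor morphisms $T(w)\to G'$ by a finite-state bottom-up device threaded through the grammar $D\to \varepsilon\mid (D)D$---is plausible and is genuinely in the spirit of the formal-language argument in the cited reference: for a fixed $G'$ the state can record the image vertex, the status of the parent edge, the subset of $E(G')$ already used, and which vertex-preimage subtrees have been closed off, so $HD_{P_{G'}}(t)$ does satisfy a proper polynomial system and is algebraic. The problem is entirely in your second step, and you have in effect conceded it: writing $0\to K\to\bigoplus_i P_{G_i}\to M\to 0$ and subtracting is circular (the kernel $K$ is just another finitely generated module, and there is no finiteness of resolutions to make the recursion terminate), while the proposed repair---``build the automaton directly for $M$, with states encoding local morphism data modulo the finitely many relations''---is not an argument. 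The quantity $\dim_k M(T(w))$ is the corank of a relation matrix whose rank is a global linear-algebra invariant (and can depend on the characteristic of $k$); it is not computed by counting accepting runs of any finite-state device built from ``local data modulo relations,'' and the Noetherian property by itself does not bound what such states would have to remember. This is precisely the crux, and your proposal leaves it unproved.

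The missing idea, which is how the proof in the cited work actually proceeds, is a Gr\"obner-type degeneration in the style of Sam--Snowden: one fixes a presentation $M \cong \bigl(\bigoplus_i P_{G_i}\bigr)/K$, chooses a suitable well-ordering on the basis morphisms $T\to G_i$ compatible with precomposition, and replaces $K$ by its module of initial (lead) terms, so that $\dim_k M(T)$ becomes the number of ``standard'' morphisms $T\to G_i$, i.e.\ those avoiding the lead terms of a finite Gr\"obner-type generating set of $K$ (finiteness being exactly where Noetherianity, equivalently the underlying well-quasi-order, enters). This converts the rank computation into a purely combinatorial count of words: for each generator and each finite avoidance pattern the relevant set of Dyck words is an unambiguous context-free language, and these languages admit the Boolean combinations needed, so Chomsky--Sch\"utzenberger (or the standard algebraicity of solutions of proper algebraic systems) applies. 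Without this step---some mechanism turning $\dim_k$ of an arbitrary subquotient into a count of lattice words---your automaton for principal projectives cannot be leveraged to general finitely generated $M$, so the proposal as written has a genuine gap at exactly the point where the theorem's difficulty lies.
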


If $M$ is the module that assigns the field $k$ to every graph, then the above theorem specializes to say that the generating function for the number of Dyck-paths is algebraic. In other words, the generating function of the Catalan numbers is algebraic. This is a very well known fact about these numbers.

\section{Applications to topological combinatorics}\label{sec:applications}

Throughout this section, let $ g \geq 0 $ be a fixed integer. In this section, we present a number of applications of the representation theory of  $\cGop_{\leq g} $ to topological
combinatorics. 

\subsection{The matching complex}
Now, we introduce matching complexes of graphs and show that for any $ i $, the map assigning to
each graph $ G $ with genus at most $ g $ the $ i $th homology group of its matching complex forms a finitely generated $ \cGop_{\leq
	g}
$-module.

For a graph $ G $, a \textbf{matching on $ G $} is a subset $ S \subset E(G) $ of non-loop edges such that no
two edges in $ S $ share a vertex. Note that any subset of a matching is also a matching and the empty set, $
\varnothing $, is a matching. Thus, the collection of matchings on $ G $ forms a simplicial complex $ \cM (G)
$ whose vertices are the edges of $ G $, which we call the \textbf{matching complex on $ G $}.

The topology of matching complexes on the complete graphs  $ K_n $ and the complete bipartite graphs $ K_{m,n} $ have been well studied.  Much of what is known about the topology of these complexes is outlined in \parencite{WachsSurvey} and \parencite{JonssonBook}. We outline a few notable results on these complexes below.

It was shown by Bj\"{o}rner, Lov{\'{a}}sz, Vre{\'{c}}ica, and {\v{Z}}ivaljevi{\'{c}} in
\parencite{BjorLovVreZiv} that, for $ n \geq 2 $, $ \cM(K_n) $ is $ \nu_n - 1 $ connected and, for $ 1 \leq m
\leq n $, $ \cM(K_{m,n})$ is $ \nu_{n,m} -1 $ connected where 
\[
	\nu_n = \left\lfloor\frac{n+1}{3}\right\rfloor - 1 \text{ and } \nu_{m,n} = \min \left\{ m,
	\left\lfloor\frac{m+n+1}{3}\right\rfloor \right\}
	-1.
\] 
Later, this was result was strengthened by Shareshian and Wachs, who showed that the $ \nu_n $-skeleton of $
\cM(K_n) $ is shellable \parencite{ShareshWachs} and Ziegler, who showed that the $ \nu_{n,m} $-skeleton of $
\cM(K_{m,n}) $ is shellable.

\begin{remark}
In view of our main Theorem \ref{fghommatch}, the above results may seem a bit concerning. Indeed, finite
generation would be meaningless if it was known that the module was eventually --- i.e. perhaps for all graphs
with sufficiently many edges --- constantly zero! To see that this is not the case, consider the tree $G_n$, which has two vertices of degree $n+1$, connected to each other by a single edge. In this case the matching complex of $G_n$ is seen to be one dimensional. Indeed, it is precisely the disjoint union of the complete bipartite graph $K_{n,n}$ and a single point. This particular topological space only has non-trivial homology in degrees 0 and 1, where it is isomorphic to a free group of ranks 2 and $n^2-2n + 1$, respectively. Similar examples can be constructed to show that, in any homological degree, the homology of the matching complex is not eventually constantly zero.
\end{remark}

The rational homology of these complexes is known due to the work of Bouc \parencite{Bouc} and Friedman and
Hanlon \parencite{FriedHan}. However, much is still unknown regarding the torsion in the integral homology of
these complexes. Shareshian and Wachs show that the homology of $ \cM(K_n) $ exhibits 3-torsion for sufficiently
large $ n $ and the homology of $ \cM(K_{m,n}) $ also contains 3-torsion for certain (but infinitely many)
values of $ m $ and $ n $ \parencite{ShareshWachs}. Later, Jonsson showed that 5-torsion in present in the
homology of $ \cM(K_n) $ for
sufficiently large $ n $ and found that there are elements of order 5, order 7, order 11, and order 13
appearing in the homology of $ \cM(K_n) $ for varying values of $ n $ \parencite{JonssonMoreTor}. For graphs other than $ K_n $ and $ K_{m,n} $, not much is known about the topology of their matching complex. 

There is also a natural generalization of the matching complex. Note, that for a graph $ G $, and any subset $
F \subset E(G) $, we have the induced the subgraph $ G_F $ of $ G$.  Namely, $ G_F $ is the graph with $
V(G_F) = V(G) $ and $ E(G_F) = F $. Thus, $ F $ is a matching on $ G $ if and only if each vertex of $ G_F $
has degree at most 1. More generally, for any integer $ d \geq 1 $ we can consider subsets $ F \subset E(G) $
such that each vertex of $ G_F $ has degree at most $ d $. Call such a subset a \textbf{$d$-matching of $G$}.  Note that the collection of all $ d $-matchings on $ G $ forms a simplicial complex which we will denote $ \cM_d (G) $. In particular, $ M_1(G) = \cM (G) $, the matching complex.

In the case where $ G $ is a forest, Singh showed that $ \cM_d (G) $ is either contractible or homotopy
equivalent to a wedge of spheres \parencite{SinghForests}. For $ d \geq 2 $, Jonsson showed that if $ n $ is
sufficiently large, then there exist certain values of $ d $ depending on $ n $, where the homology of $
\cM_d(K_n) $ contains 3-torsion \parencite{JonssonBoundedDeg}. 

In this paper, we approach the problem of studying the homology of $ \cM_d (G) $ for general graphs $ G $ in a
completely new way, by realizing the map sending a graph $ G $ to the homology of $ \cM_d (G) $ as a finitely
generated $ \cGop_{\leq g} $-module.

To do this, we will need a particular $ \cGop_{\leq g} $-module. Let $ M_{E} $ be the $ \cGop_{\leq g} $-module which assigns to each graph $ G $ the free
$ R $-module with basis indexed by the edges of $ G $. For each minor morphism $ \phi : G \to G' $, the
natural inclusion $ \phi^*: E(G') \to E(G) $ gives us a natural inclusion $ M_{E}(\phi) : M_E(G') \to M_{E}(G)
$. We will call $ M_{E}$, the \textbf{edge module}. This $ \cGop_{\leq g} $-module satisfies the following very important property.

\begin{lemma}\label{fg_edge}
	For any $ i $, the $ i $th tensor power of the edge module, $ M_E^{\otimes i} $, is a finitely generated $
	\cGop_{\leq g}$-module.
\end{lemma}
\begin{proof}
		For any graph $ G $, $ M_{E}^{\otimes i}(G) $ has basis given by $ i $-tuples of edges of $ G $. Let $G$
		be a graph with strictly more than $i$ edges. Then for any tuple $( e_1, \dots, e_{i})$ of edges of $ G $,
		one may find an edge $e$ of $G$ that is not among the $e_j$. By contracting $e$, or deleting it in the
		case where $e$ is a loop, one obtains a minor morphism $\phi: G \rightarrow G'$. It is clear that the
		tuple $( e_1, \dots, e_{i})$ will be in the image of the map induced by
		$\phi$. In other words, $ M_{E}^{\otimes i} $ is generated by graphs with at most $ i $ edges, of which
		there are only finitely many up to isomorphism in $ \cGop_{\leq g} $. 
\end{proof}

\begin{theorem}\label{fghommatch}
	For any $ i \geq 0 $ and any $ d \geq 1 $, $ H_i(\cM_d(-);\Z) $ is a finitely generated $ \cGop_{\leq g} $-module.
\end{theorem}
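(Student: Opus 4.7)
The plan is to realize $H_i(\cM_d(-);\Z)$ as a subquotient of a chain complex of finitely generated $\cGop$-modules, then invoke Theorem \ref{Noeth} together with closure of finite generation under quotients. The key ingredients are Lemma \ref{fg_edge} giving finite generation of tensor powers of the edge module $M_E$, and the observation that the simplicial chain complex of $\cM_d(G)$ can be built using only the edge set together with a combinatorial condition that is preserved by minor morphisms.

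First, I would verify that for every $j\geq 0$ the assignment $G\mapsto C_j(\cM_d(G);\Z)$ is a $\cGop$-module. The only point requiring care is that, for a minor morphism $\phi:G\to G'$, the induced edge injection $\phi^*:E(G')\hookrightarrow E(G)$ carries $d$-matchings to $d$-matchings. This is true because, for each vertex $v\in V(G)$, the arrows of $G$ incident to $v$ whose underlying edge lies in $\phi^*(F')$ inject under $\phi$ into the arrows of $G'$ incident to $\phi(v)$ whose underlying edge lies in $F'$ (since $\phi$ takes heads to heads and tails to tails on non-contracted, non-deleted arrows), of which there are by hypothesis at most $d$. Consequently $\phi^*$ induces a simplicial map $\cM_d(G')\to\cM_d(G)$, and the resulting chain-group maps commute with the usual simplicial boundary $\partial$, yielding a chain complex of $\cGop$-modules.

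Next, I would show that each $C_j(\cM_d(-);\Z)$ is finitely generated. Sending an ordered tuple $(e_0,\dots,e_j)$ of edges of $G$ to the oriented simplex $[e_0,\dots,e_j]$ when the $e_k$ are distinct and form a $d$-matching, and to zero otherwise, defines a surjective $\cGop$-module homomorphism $M_E^{\otimes(j+1)}\twoheadrightarrow C_j(\cM_d(-);\Z)$. The source is finitely generated by Lemma \ref{fg_edge}, so the same is true of the target.

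Putting the pieces together, $\im(\partial_{i+1})$ is a quotient of the finitely generated module $C_{i+1}(\cM_d(-);\Z)$, hence finitely generated, while $\ker(\partial_i)$ is a $\cGop$-submodule of the finitely generated module $C_i(\cM_d(-);\Z)$ and so is finitely generated by Theorem \ref{Noeth}. Then $H_i(\cM_d(-);\Z)=\ker(\partial_i)/\im(\partial_{i+1})$ is a quotient of a finitely generated $\cGop$-module, hence finitely generated. The only genuine subtlety is the verification that the $d$-matching condition is natural under minor morphisms; once that is in hand, the rest of the argument is formal, powered entirely by the Noetherian property and by finite generation of $M_E^{\otimes(j+1)}$.
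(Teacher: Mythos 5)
Your overall strategy matches the paper's: verify that $\phi^*$ carries $d$-matchings to $d$-matchings (your arrow-level argument is correct and is exactly the paper's point), assemble the simplicial chain complex into a complex of $\cGop$-modules, and deduce finite generation of $H_i$ from Lemma \ref{fg_edge} and Theorem \ref{Noeth}. The gap is in your middle step: the map $M_E^{\otimes(j+1)} \to C_j(\cM_d(-);\Z)$ sending a tuple of edges to the corresponding oriented simplex when it is a $d$-matching of distinct edges, and to zero otherwise, is \emph{not} a morphism of $\cGop$-modules. Naturality would require the $d$-matching condition to be not only preserved but also \emph{reflected} by $\phi^*$, and only the first holds. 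Concretely, let $G$ be the path with three edges $a,b,c$ and let $\phi: G \to G'$ contract the middle edge $b$. In $G'$ the surviving edges $a',c'$ share the contracted vertex, so for $d=1$ the basis element $a'\otimes c'$ maps to $0$ in $C_1(\cM(G'))$ and hence to $0$ in $C_1(\cM(G))$; but under $M_E^{\otimes 2}(\phi)$ it maps to $a\otimes c$, and $a,c$ are disjoint in $G$, so your map at $G$ sends it to the nonzero simplex $[a,c]$. The square fails to commute. (This is the same ``pushing apart'' phenomenon that makes $d$-matchings preserved in the forward direction; it is precisely why they are not reflected.)

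The gap is repairable, and the repair is what the paper does: rather than presenting $C_j(\cM_d(-);\Z)$ as a quotient of $M_E^{\otimes(j+1)}$, observe that it is a $\cGop$-submodule of $\bigwedge^{j+1} M_E$ (this inclusion \emph{is} natural, because it only uses the preserved direction of the $d$-matching condition), and $\bigwedge^{j+1} M_E$ is a quotient of $M_E^{\otimes(j+1)}$. Hence $C_j(\cM_d(-);\Z)$, and therefore $H_i(\cM_d(-);\Z)$, is a subquotient of a finitely generated $\cGop$-module, and Theorem \ref{Noeth} (submodules of finitely generated modules are finitely generated, and quotients trivially are) finishes the proof. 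With that substitution your closing argument, that $\ker\partial_i$ is finitely generated by Noetherianity and $H_i$ is a quotient of it, goes through unchanged.
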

\begin{proof}
	For this proof, we are working with $ \Z $-coefficients, though we will supress this from the notation. 

	First, we argue that $ H_i(\cM_d(-)) $ is in fact a $ \cGop_{\leq g} $-module. Recall that given a minor morphism $
	\phi : G \to G' $, we have an induced inclusion on edge sets $ \phi^*: E(G') \to E(G) $ sending an edge in $
	G' $ to the unique edge in its preimage under $ \phi $. Now, let $ F' \subset E(G') $.  If some vertex $ v
	\in V(G) $ were incident to more than $ d $ edges in $ \phi^*(F') $, then $ \phi(v) \in V(G') $ would be
	incident to more than $ d $ edges in $ F' $. Thus, if $ F' $ is a $ d $-matching on $ G' $, then $
	\phi^*(F') $ is a $ d $-matching on $ G $. We may therefore, consider $ \phi^* $ as a function 
	\[
		\phi^*: \{\text{$d$-matchings on $G'$}\} \to \{\text{$d$-matchings on $G$}\} .
	\] 
	Furthermore, this map is compatible with taking boundaries of simplices and so we get a chain map 
	\[
		\phi_\bullet : C_\bullet(\cM_d(G')) \to C_\bullet(\cM_d(G))
	\] 
	where $ C_\bullet(\cM_d(G)) $ denotes the simplicial chain complex of $ \cM_d(G) $.  This induces a map $
	H_i(\cM_d(G')) \to H_i(\cM_d(G)) $.  The above construction is compatible with taking compositions of minor
	morphisms and so indeed $ H_i(\cM_d(-)) $ is a $ \cGop_{\leq g} $-module. 

	To see that it is finitely generated, note that following the above construction, for any $ i $, $
	C_i(\cM_d(-)) $ also forms a $ \cGop_{\leq g} $-module and $ H_{i}(M_{d}(-)) $ is a subquotient of $ C_{i}(M_{d}(-))
	$.  Because $ C_i(\cM_d(-)) $ is a submodule of $ \bigwedge^i M_E $, the $ i $th wedge power of the edge
	module, and $ \bigwedge^i M_E $ is a quotient of $ M_E^{\otimes i} $, we see that $ H_i(M_{d}(-)) $ is a
	subquotient of $ M_{E}^{\otimes i} $.  By Lemma \ref{fg_edge}, $ M_{E}^{\otimes i} $ is a finitely generated
	$ \cGop_{\leq g} $-module and so Theorem \ref{thm:weakcatgraphminor} tells us $ H_i(\cM_d(-)) $ is itself finitely generated. 
\end{proof}

This theorem leads to an immediate corollary regarding the torsion that can appear in the homology which says
that for fixed $ i $ and $ d $, there is some uniform maximum torsion that can appear in $ H_i(\cM_d(G)) $ as
$ G $ ranges over all graphs of genus at most $ g $. 

\begin{corollary}\label{torsionmatch}
	For any $ i \geq 0 $ and any $ d \geq 1 $, there exists a positive integer $ \epsilon_{i,d,g} $ such that,
	for any graph $ G $ with genus at most $ g $, the torsion part of $ H_i(\cM_d(-)) $ is annihilated by $ \epsilon_{i,d} $. 
\end{corollary}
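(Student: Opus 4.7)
The plan is to observe that this corollary is essentially an immediate consequence of Theorem \ref{fghommatch} combined with the general bounded torsion result Theorem \ref{boundedTorsion}. Since the heavy lifting has already been done in establishing that $H_i(\cM_d(-);\Z)$ is a finitely generated $\cGop$-module over $\Z$, no further category-theoretic or topological input is required here.

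Concretely, I would first invoke Theorem \ref{fghommatch} to conclude that $M := H_i(\cM_d(-);\Z)$ is a finitely generated $\cGop$-module over the Noetherian ring $\Z$. Next, I would apply Theorem \ref{boundedTorsion} verbatim to $M$: it guarantees the existence of a non-negative integer $\epsilon$ such that the torsion part of $M(G)$ has exponent at most $\epsilon$ for every graph $G$. Setting $\epsilon_{i,d} := \epsilon$ (with the dependence on $i$ and $d$ coming from the dependence of $M$ on $i$ and $d$) gives the desired bound.

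There is no real obstacle to overcome here, as this is a textbook application of the global torsion-boundedness principle for finitely generated $\cGop$-modules established earlier. The only thing worth noting, for the reader's benefit, is the conceptual point that the existence of $\epsilon_{i,d}$ does not come with an effective computation: the proof of Theorem \ref{boundedTorsion} extracts $\epsilon$ as the least common multiple of the annihilators of the torsion submodules $T(G_1),\ldots,T(G_n)$ on a finite generating set guaranteed abstractly by the Noetherian property (Theorem \ref{Noeth}). Thus, while Corollary \ref{torsionmatch} asserts uniform boundedness of torsion exponents in $H_i(\cM_d(G))$ across all graphs $G$, it gives no a priori way to compute $\epsilon_{i,d}$, which is consistent with the known subtlety of torsion phenomena for matching complexes recorded in the discussion of \parencite{ShareshWachs} and \parencite{JonssonMoreTor} preceding the theorem.
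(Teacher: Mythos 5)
Your proposal is correct and matches the paper's argument exactly: the corollary is deduced by combining Theorem \ref{fghommatch} (finite generation of $H_i(\cM_d(-);\Z)$ as a $\cGop$-module) with the uniform torsion bound of Theorem \ref{boundedTorsion}. The additional remarks on non-effectivity of $\epsilon_{i,d}$ are accurate but not needed for the proof.
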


\begin{proof}
This follows immediately from the previous theorem as well as Theorem \ref{boundedTorsion}.
\end{proof}

\subsection{Other graph complexes} \label{othergraph}

The matching complex of a graph is just one example of a simplicial complex on the edges of a graph. More
generally, one can consider what is called a \textbf{monotone graph property}. This is a collection $
\mathcal{P} $ of graphs, closed under isomorphisms, such that if $ G \in \mathcal{P}$ and $ G' $ is another
graph with $ V(G') = V(G) $ and $ E(G') \subset E(G) $, then $ G' \in \mathcal{P} $. In other words, $
\mathcal{P} $ is closed under edge deletions. Note that, for any graph $ G $, we obtain a simplicial complex $ \Delta_\mathcal{P}(G) $ on the edges of $ G $ 
where the $ n $-simplices of $ \Delta_\mathcal{P}(G) $ correspond to graphs $ G' \in \mathcal{P}$ with $ V(G') = V(G) $, $ E(G') \subset E(G) $, and $ |E(G') | = n+1 $.  Intuitively if we identify a graph in $ \mathcal{P} $ with its set of edges, the $ n $-simplices are just sets of $ n+1 $ edges of $ G $ in $ \mathcal{P} $.  See \parencite{JonssonBook} for a comprehensive reference on graph complexes. In particular \parencite[Table 7.1]{JonssonBook} gives a list of monotone graph properties and what is known about the homotopy type of their corresponding simplicial complexes. 

Recall that a minor morphism $ \phi: G \to G' $ induces an inclusion $ \phi^*: E(G') \to E(G) $. Suppose that
$ \mathcal{P} $ is a monotone graph property with the extra condition that, if $\phi: G \to G' $ is a minor
morphism and $ H' $ is a simplex in $ \Delta_\mathcal{P}(G') $ (that is, $ H' \in \mathcal{P} $, $ V(H') =
V(G') $, and $ E(H') \subset E(G') $), then the subgraph of $ G $ induced by the image $ \phi^*(E(H')) $ is
also in $ \mathcal{P} $. Call such a $ \mathcal{P} $ a \textbf{$ \cGop_{\leq g} $-monotone graph property}. In rough
terms, a $ \cGop_{\leq g} $-monotone graph property is a monotone graph property that is also preserved under
``uncontracting'' edges.  Note that by construction, if $ \mathcal{P} $ is a $ \cGop_{\leq g} $-monotone graph property, then for any minor morphism $ \phi: G \to G' $, the $ n $-simplices of $ \Delta_\mathcal{P}(G') $ naturally include in the $ n $-simplices of $ \Delta_\mathcal{P}(G)$. This observation and the argument used in the proof of Theorem \ref{fghommatch} yields the following result.

\begin{proposition}
	Let $ \mathcal{P} $ be a $ \cGop_{\leq g} $-monotone graph property. For any $ i $, the assignment of $ G $
	to the $ i $th simplicial homology group $ H_i(\Delta_\mathcal{P}(G)) $ forms a finitely generated $ \cGop
_{\leq g} $-module.
\end{proposition}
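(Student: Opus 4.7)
The plan is to mirror the proof of Theorem \ref{fghommatch} verbatim, where the specific role played by ``$d$-matchings'' is now assumed by ``simplices of $\Delta_\mathcal{P}$.'' The $\cGop$-monotonicity hypothesis is precisely the ingredient needed to make this translation go through.

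First, I would verify that $G \mapsto H_i(\Delta_\mathcal{P}(G))$ is a well-defined $\cGop$-module over $\Z$. Given a minor morphism $\phi : G \to G'$, the edge-set injection $\phi^*: E(G') \to E(G)$ sends a subset $F' \subset E(G')$ to the subset $\phi^*(F') \subset E(G)$; the $\cGop$-monotonicity of $\mathcal{P}$ is exactly the assertion that if $F'$ corresponds to a simplex of $\Delta_\mathcal{P}(G')$, then $\phi^*(F')$ corresponds to a simplex of $\Delta_\mathcal{P}(G)$. Hence $\phi^*$ induces a simplicial map $\Delta_\mathcal{P}(G') \to \Delta_\mathcal{P}(G)$, and thus a chain map on simplicial chain complexes compatible with boundary operators. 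Functoriality in $\phi$ follows from functoriality of $\phi^*$, so passage to $H_i$ yields the desired $\cGop$-module structure.

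Next, I would identify $C_i(\Delta_\mathcal{P}(-))$ as a sub-$\cGop$-module of a finitely generated module. Following the same argument as in Theorem \ref{fghommatch}, $C_i(\Delta_\mathcal{P}(-))$ is a $\cGop$-submodule of some exterior power $\bigwedge^{j} M_E$ of the edge module, since an $i$-simplex of $\Delta_\mathcal{P}(G)$ is (up to sign) a wedge of edges of $G$, and minor morphisms act via $\phi^*$ on these wedges in a way compatible with the chain structure. Since $\bigwedge^{j} M_E$ is a quotient of $M_E^{\otimes j}$, which is finitely generated by Lemma \ref{fg_edge}, Theorem \ref{Noeth} tells us that $C_i(\Delta_\mathcal{P}(-))$ is finitely generated. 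Finally, $H_i(\Delta_\mathcal{P}(-))$ is a subquotient of $C_i(\Delta_\mathcal{P}(-))$, so one further application of Theorem \ref{Noeth} delivers finite generation of the homology $\cGop$-module.

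There is no substantive obstacle beyond bookkeeping; the conceptual work was done upstream in isolating the correct monotonicity condition on $\mathcal{P}$. The $\cGop$-monotonicity hypothesis is inserted at exactly the single point where the property ``being a $d$-matching'' was invoked in Theorem \ref{fghommatch}, and every other step of that proof transfers without change.
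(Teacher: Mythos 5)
Your proposal is correct and is essentially identical to the paper's treatment: the paper proves this proposition simply by observing that $\cGop$-monotonicity makes $\phi^*$ send simplices of $\Delta_\mathcal{P}(G')$ to simplices of $\Delta_\mathcal{P}(G)$ and then invoking the argument of Theorem \ref{fghommatch} (chain map, $C_i$ inside an exterior power of $M_E$, finite generation of $M_E^{\otimes j}$, and Noetherianity), which is precisely what you have written out.
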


Thus, for any $ \cGop_{\leq g} $-monotone graph property, one would obtain an analogous result to Corollary
\ref{torsionmatch} about the torsion that can appear in the $ i $th homology group of $ \Delta_\mathcal{P}(G)
$ as $ G $ ranges over all graphs with genus at most $ g $.

\subsection{Commutative algebra of graph complexes}

Famously, the study of simplicial complexes is, in a formal sense, dual to the commutative algebra of square-free monomial ideals through the Stanley-Reisner correspondence. We consider this perspective in this section.
Fix a field $ K $. Recall that our edge functor $ E: \cGop_{\leq g} \to \FI $ gives rise to another functor $
\cA_E $ from $ \cGop_{\leq g} $ to $ K $-algebras where \[
\cA_{E}(G) = K[ x_{e} \,|\, e \in E(G) ].
\]

In \parencite{NaRo}, Nagel and R\"omer show there are uniform bounds on the degrees for nonvanishing graded
Betti numbers for certain families of ideals that form modules over a particular $ \mathrm{FI} $-algebra.
Using similar techniques, we start by describing some of the homological algebra for $ \cA_E $-modules and
prove analogous results about the graded Betti numbers for families of ideals forming modules over the edge
algebra $ \cA_E $. 

\begin{lemma}\label{proj_resolution}
	For any finitely generated $ \cA_E $-module $ M $ there exists a projective resolution $ F_\bullet $ of $ M $
	by finitely generated $ \cA_E $-modules.
\end{lemma}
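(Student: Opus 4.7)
The plan is to build a projective resolution $F_\bullet \to M$ inductively, at each stage surjecting from a finite direct sum of principal projective $A$-modules via Lemma \ref{fg_pp_surj} and controlling the new kernel via the Noetherian property Theorem \ref{Noeth_algebra}.

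First I would verify that each principal projective $A$-module $P^A_G$ is genuinely projective. This is a Yoneda-style computation: for any $A$-module $N$, evaluation at the distinguished element $b_{\id} \in P^A_G(G)$ gives a natural isomorphism
\[
\Hom_A(P^A_G, N) \;\xrightarrow{\sim}\; N(G), \qquad f \longmapsto f(b_{\id}),
\]
whose inverse sends $x \in N(G)$ to the morphism defined on each $G'$ by $b_\phi \mapsto N(\phi)(x)$ and extended $A(G')$-linearly. Naturality in minor morphisms and compatibility with the $A$-action are immediate from the formulas $P^A_G(\psi)(b_\phi) = b_{\phi \circ \psi}$ and the definition of an $A$-module. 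Since kernels and cokernels of $A$-module maps are computed pointwise in the ambient functor category, the evaluation functor $N \mapsto N(G)$ is exact, and therefore so is $\Hom_A(P^A_G, -)$. Consequently every finite direct sum of principal projectives is a projective $A$-module.

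Second, I would assemble the resolution one step at a time. By Lemma \ref{fg_pp_surj} there is a surjection $\pi_0 \colon F_0 \twoheadrightarrow M$ from a finite sum $F_0 = \bigoplus_{i=1}^{n_0} P^A_{G_i^{(0)}}$ of principal projectives. Since $F_0$ is finitely generated and Theorem \ref{Noeth_algebra} guarantees that submodules of finitely generated $A$-modules are finitely generated, the kernel $K_0 := \Ker(\pi_0)$ is finitely generated, so another application of Lemma \ref{fg_pp_surj} yields a surjection $F_1 \twoheadrightarrow K_0$ from a finite sum of principal projectives. Composing with $K_0 \hookrightarrow F_0$ defines the differential $d_1$. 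Iterating, at stage $k \geq 1$ one has $F_k$ a finite sum of principal projectives and $K_k := \Ker(d_k)$ finitely generated by Noetherianity applied to $F_k$; choose a surjection $F_{k+1} \twoheadrightarrow K_k$ from a finite sum of principal projectives and set $d_{k+1}$ to be its composition with $K_k \hookrightarrow F_k$. Exactness at each $F_k$ is built in via $\im(d_{k+1}) = K_k = \Ker(d_k)$, producing the desired projective resolution by finitely generated $A$-modules.

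There is no genuine obstacle here: the lemma is a formal consequence of the two inputs already in hand, namely the characterization of finite generation by surjections from finite sums of principal projectives (Lemma \ref{fg_pp_surj}) and the Noetherian property for finitely generated $A$-modules (Theorem \ref{Noeth_algebra}). The only substantive verification is the projectivity of $P^A_G$, which reduces to the Yoneda identification above together with pointwise exactness in the functor category of $A$-modules.
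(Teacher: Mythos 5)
Your proposal is correct and follows essentially the same route as the paper: iterate Lemma \ref{fg_pp_surj} to surject from finite sums of principal projectives and use Theorem \ref{Noeth_algebra} to see each successive kernel is finitely generated. The only addition is your explicit Yoneda-style check that $P^A_G$ is projective, which the paper leaves implicit.
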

\begin{proof}
	By Lemma \ref{presentations}, since $ M $ is finitely generated, there exists graphs $ G_1,\dots,G_n $ and a surjection 
	\[
		\bigoplus_{i = 1}^{n} P^{\cA_E}_{G_i} \to M
	\]
	where $ P_{G_i} $ is the principal projective module at $ G_i $ as defined in Section \ref{sec:modoveralg}.
	Let $ F_0 = \bigoplus_{i = 1}^{n} P^{\cA_E}_{G_i} $. The kernel $ K $ of this surjection must be finitely generated by Theorem \ref{thm:grobpair} and so again, we may find finitely many graphs $ H_1, \dots, H_m $ such that there is a surjection 
	\[
		\bigoplus_{i=1}^{m} P^{\cA_E}_{H_i} \to K.
	\] 
	Let $ F_1 = \bigoplus_{i = 1}^{m} P^{\cA_E}_{H_i} $. Continuing in this fashion, we build the entire projective resolution $ F_\bullet $ of $ M $. 
\end{proof}

Given two finitely generated $ \cA_E $-modules, $ M $ and $ N $, we may take their tensor product over $ \cA_E
$ by taking the pointwise tensor product. This gives an $ \cA_E $-module $ M \otimes_{\cA_E} N $. Explicitly, for any graph $ G $, 
\[
	(M \otimes_{\cA_E} N)(G) = M(G) \otimes_{\cA_E(G)} N(G),
\] 
where the morphisms are defined in the obvious way.  Now, take a projective resolution $ F_\bullet $ of $ M $
as in Lemma \ref{proj_resolution}. Tensoring this resolution with $ N $ gives a chain complex $ F_\bullet
\otimes_{\cA_E} N $ of $ \cA_E $ modules. We define $ \Tor_{i}(M,N) $ to be the $ \cA_E $-module where
\[
	\Tor_{i}(M,N)(G) = H_{i}(F_\bullet(G) \otimes_{\cA_E(G)} N(G)).
\]

For each $ G $, $ \cA_E(G) $ is a graded $ K $-algebra, where $ x_{e} $ is in degree 1 for each $ e \in
E(G) $.  A \textbf{graded $ \cA_E $-module} $ M $ is an $ \cA_E $-module such that each $ M(G) $ is a
graded $ \cA_E(G)
$-module and for any for any minor morphism $ \phi : G \to G' $, the image under $ M(\phi) $ of the degree $ i
$ part of $ M(G') $, denoted $ M (G')_{i} $, is contained in $ M(G)_{i} $.

\begin{remark}\label{graded_proj_resulotion}
	Given a finitely generated graded $ \cA_E $-module $ M $, we may upgrade the surjection of Lemma
	\ref{proj_resolution} to a surjection of graded $ \cA_E $-modules in the following way. Suppose $ M $ is generated by
	the elements $ m_1 , \dots, m_{n} $ where for each $ i $, $ m_{i} \in M(G_i) $. We may assume that each $
	m_i $ is a homogeneous element of $ M(G_i) $ of degree $ d_{i} $. Then, for each $ i $, we give a grading to
	the principal projective $ \cA_E $-module $ P_{G_i}^{\cA_E} $ the grading where $ b_{\phi} \in
	P_{G_i}^{\cA_E}(G) $ is homogeneous of degree $ d_{i} $ for any graph $ G $, and minor morphism $ \phi: G
	\to G_i $. Then, the morphism of $ \cA_E $-modules 
	\[
		\bigoplus_{i = 1}^{n} P_{G_i}^{\cA_E} \to M
	\]
	that sends $ b_{\id_{i}} $ to $ m_{i} $ is a surjection of graded $ \cA_E $-modules. With this fact, we see
	that the projective resolution of Lemma \ref{proj_resolution} may, in fact, be upgraded to a a projective
	resulotion of graded $ \cA_E $-modules. 
\end{remark}

Let $ I_{E} $ be the $ \cA_E $-submodule of $ \cA_E $ itself where $ I_E(G) $ is the ideal $ \langle x_{e} | e
\in E(G) \rangle \subset \cA_E(G) $ for each graph $ G $. Call $ I_{E} $ the \textbf{edge ideal of $\cA_E$}.
Then, $ K_{\cA_E} = \cA_E/I_E $ is the graded $ \cA_E $-module taking each graph to the base field $ K $ in
degree 0, where all morphisms are the identity and for each graph $ G $, monomials of $ \cA_E(G) $ act by
zero, and elements of $ K \subset \cA_E(G) $ act by multiplication. 

\begin{lemma}\label{tor_fg}
	If $ M $ is a finitely generated graded $ \cA_E $-module, then $ \Tor_{i}(M , K_{\cA_E}) $ is also a
	finitely generated graded $ \cA_S $-module for all $ i $. 
\end{lemma}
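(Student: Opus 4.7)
The plan is to combine the (graded refinement of the) projective resolution from Lemma \ref{proj_resolution} with the Noetherian property of $A$-modules (Theorem \ref{Noeth_algebra}). First I would fix a resolution $F_\bullet \to M$ of graded $A$-modules in which each $F_j$ is a finite direct sum $\bigoplus_k P^A_{G_{j,k}}$ of graded principal projective $A$-modules (with appropriate internal degree shifts), as provided by Lemma \ref{proj_resolution} together with Remark \ref{graded_proj_resulotion}. Then by construction $\Tor_i(M, K_A)$ is the $i$-th homology of the complex $F_\bullet \otimes_A K_A$, and every term in this complex inherits a grading from $F_\bullet$ and $K_A$.

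The key computation is that for any graph $G$, the $A$-module $P^A_G \otimes_A K_A$ is finitely generated. Indeed, since $P^A_G(H) = A(H)\langle b_\phi \mid \phi \in \Hom_\cG(H,G)\rangle$ is a \emph{free} $A(H)$-module on the symbols $b_\phi$, we have
\[
(P^A_G \otimes_A K_A)(H) = K\langle b_\phi \otimes 1 \mid \phi \in \Hom_\cG(H,G)\rangle,
\]
with $I_E(H)$ acting as zero. Each generator $b_\phi \otimes 1$ is the image of $b_{\id} \otimes 1 \in (P^A_G \otimes_A K_A)(G)$ under the transition map induced by $\phi$, so $P^A_G \otimes_A K_A$ is generated as an $A$-module by the single element $b_{\id} \otimes 1$. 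A finite direct sum of finitely generated graded $A$-modules is finitely generated, so each $F_j \otimes_A K_A$ is finitely generated.

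To conclude, observe that $\Tor_i(M, K_A)$ is by definition a subquotient of $F_i \otimes_A K_A$: the kernel of one boundary modulo the image of another. By Theorem \ref{Noeth_algebra}, any $A$-submodule of a finitely generated $A$-module is finitely generated, and quotients of finitely generated $A$-modules are automatically finitely generated, so the subquotient $\Tor_i(M, K_A)$ is itself finitely generated. Gradedness is preserved throughout, since the resolution is graded, $K_A$ is concentrated in degree $0$, and all differentials are homogeneous. I expect the only subtle point to be verifying finite generation of $P^A_G \otimes_A K_A$, i.e.\ that killing the augmentation ideal $I_E$ does not destroy finite generation; but this reduces immediately to the observation that the principal projective is freely generated over $A$ by $b_{\id}$.
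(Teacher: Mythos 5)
Your proposal is correct and follows essentially the same route as the paper: take the graded projective resolution from Lemma \ref{proj_resolution} and Remark \ref{graded_proj_resulotion}, show each $F_i \otimes_A K_A$ is finitely generated, and conclude via the subquotient argument and Theorem \ref{Noeth_algebra}. The only cosmetic difference is that the paper identifies $F_i \otimes_A K_A$ with the quotient $F_i/I_E F_i$ rather than computing $P^A_G \otimes_A K_A$ explicitly on the basis elements $b_\phi$, but both observations establish the same finite generation step.
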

\begin{proof}
	Let $ F_\bullet $ be a projective resolution of $ M $ where each term is finitely generated as in Lemma
	\ref{proj_resolution}. By Remark \ref{graded_proj_resulotion} we may assume $ F_\bullet $ is a projective
	resolution by graded $ \cA_E $-modules. For each $ i $, $ F_i \otimes K_{\cA_E} $ is isomorphic as an $
	{\cA_E} $-module to the module $ F_i/ I_E F_i $ taking a graph $ G $ to the $ {\cA_E}(G) $-module $ F_i(G) /
	I_E(G) F_i(G) $.  Thus, $\Tor_i(M, K_{\cA_E})$ is a subquotient of a finitely generated $ {\cA_E} $-module and is thus, finitely generated by Theorem \ref{thm:grobpair}.
\end{proof}

Given a finitely generated graded $ {\cA_E} $-module $ M $, a graph $ G $, and integers $ i, a \geq 0 $, the \textbf{$ i\text{th} $ graded Betti number of $M$ in degree $ a $ with respect to $ G $} is defined to be 
\[
	\beta^G_{i,a}(M) := \dim_{K} (\Tor_{i}(M, K_{\cA_E})(G))_a.
\]

\begin{theorem}\label{finite_betti}
	Let $ M $ be a finitely generated graded $ {\cA_E} $-module and fix $ i \geq 0 $. Then, there exists an integer $ n_i $ such that for any graph $ G $, 
	\[
		\beta^G_{i,a}(M) = 0 \quad \text{for all} \quad a > n_i.
	\] 
\end{theorem}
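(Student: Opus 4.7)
The plan is to reduce the theorem to a statement about finitely generated graded $A$-modules on which the edge ideal acts trivially, and then observe that such modules are automatically concentrated in a bounded range of degrees.

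First, I would invoke Lemma \ref{tor_fg} to conclude that $N := \Tor_i(M, K_A)$ is itself a finitely generated graded $A$-module. By Remark \ref{graded_proj_resulotion} and Lemma \ref{proj_resolution}, pick a projective resolution $F_\bullet \to M$ by finitely generated graded $A$-modules. Then $N(G) = H_i(F_\bullet(G) \otimes_{A(G)} K)$, and the key observation is that the $A$-module structure on $N$ factors through the quotient $A \to K_A = A/I_E$. Indeed, after tensoring $F_\bullet(G)$ over $A(G)$ with $K$, every $x_e$ acts as $x_e \cdot 1 = 0$ on each chain group, hence also on homology. So $I_E$ annihilates $N$.

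Next, I would use finite generation to extract a concrete bound. By Lemma \ref{fg_pp_surj}, there exist graphs $G_1, \dots, G_r$ and homogeneous elements $m_j \in N(G_j)$ of degrees $d_1, \dots, d_r$ giving a surjection of graded $A$-modules
\[
\bigoplus_{j=1}^r P^A_{G_j}[d_j] \twoheadrightarrow N.
\]
Because $I_E$ acts as zero on $N$, this map factors through the further quotient
\[
\bigoplus_{j=1}^r \bigl(P^A_{G_j}/I_E \cdot P^A_{G_j}\bigr)[d_j] \twoheadrightarrow N.
\]
For any graph $G$, one has $P^A_{G_j}(G)/I_E(G)P^A_{G_j}(G) = K\langle b_\phi \mid \phi \in \Hom_{\cG}(G,G_j)\rangle$, which after the degree shift is concentrated entirely in degree $d_j$. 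Consequently $N(G)$ is concentrated in degrees contained in $\{d_1, \dots, d_r\}$, and taking $n_i := \max_j d_j$ gives
\[
\beta^G_{i,a}(M) = \dim_K N(G)_a = 0 \quad \text{for all } a > n_i,
\]
uniformly in $G$.

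The only real subtlety to verify carefully is the claim that $I_E$ acts as zero on $\Tor_i(M, K_A)$, which is just the standard observation that tensoring with $A/I_E$ forces the remaining module structure to factor through the quotient; once that is in hand the rest of the argument is formal. No delicate combinatorics about the category $\cG$ is needed beyond the Noetherianity encoded in Lemma \ref{tor_fg}, which does all the heavy lifting upstream.
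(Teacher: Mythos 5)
Your proposal is correct and follows essentially the same route as the paper: both rest on Lemma \ref{tor_fg} to get finite generation of $\Tor_i(M,K_A)$, choose finitely many homogeneous generators of degrees $d_1,\dots,d_r$, and take $n_i=\max_j d_j$. The only cosmetic difference is that you make explicit that $I_E$ annihilates $\Tor_i(M,K_A)$ and hence the module is concentrated in degrees at most $n_i$, whereas the paper reaches the same vanishing by citing that $\beta^G_{i,a}(M)=\dim_K(\Tor_i(M,K_A)(G))_a$ counts degree-$a$ elements of a minimal homogeneous generating set.
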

\begin{proof}
	By Lemma \ref{tor_fg}, we know that $ T_i = \Tor_{i}(M , K_{\cA_E}) $ is a finitely generated $ {\cA_E} $ module.  Therefore, we can find a finite list $ t_1 , \dots , t_{n} $ where $ t_j \in M(G_j) $ for some (not necessarily distinct) graphs $ G_1, \dots, G_n $, such that for any graph $ G $, $ T_i(G) $ is generated by the images of the $ t_j $ under maps induced by minor morphisms $ G_j \to G $. Moreover, we may assume that each $ t_j $ is homogeneous of degree $ d_j $. Then, define $ n_i := \max \{d_{j}\}_{j=1}^{n} $. We know that $ \beta^G _{i,a}(M) = \dim_{K} (T_i(G))_a $ is the number of generators of degree $ a $ in a minimal homogeneous generating set of $ T_i(G) $ (see for instance \parencite[Lemma 1.32]{MiStu}). Thus, since the images of the $ t_j $ under maps induced by minor morphisms $ G_j \to G $ contain a minimal homogeneous generating set of $ T_i(G) $, if $ a > n_i $, we must have $ \beta^{G}_{i,a}(M) = 0 $. 
\end{proof}

For any fixed graph $ G $, we can consider $ {\cA_E}(G) = K [x_{e} | e \in E(G) ] $ as an $ \N^{E(G)} $-graded
$ K $-algebra where $ x_{e} $ is in degree $ \mathbf{v}_e \in \N^{E(G)} $ which has a 1 in the $ e $
coordinate and 0's elsewhere. We call this grading the \textbf{edge-grading on $ {\cA_E}(G) $}. If $ M $ is a
graded $ {\cA_E} $-module such that for each graph $ G $, $ M(G) $ is in fact an $ \N^{E(G)} $-graded $
{\cA_E}(G) $-module, we call $ M $ an \textbf{edge-graded $ {\cA_E} $-module}.  
Note that $ K_{\cA_E} $ is an edge-graded $ {\cA_E} $-module where $ K_{\cA_E}(G) $ is concentrated in degree
$ 0 \in \N^{E(G)} $. Thus, if $ M $ is an edge-graded $ {\cA_E} $-module, this induces an edge-graded $
{\cA_E} $-module structure on $ \Tor_{i}(M , K_{\cA_E}) $ for any $ i $.

Now, given an edge-graded $ {\cA_E} $-module $ M $, a graph $ G $, an integer $ i \geq 0 $, and $ \mathbf{a} \in \N^{E(G)} $, we can consider the \textbf{$i^{\text{th}}$ edge-graded Betti number of $M(G)$ in degree $ \a $} denoted $ \beta_{i , \a}(M(G)) $ where 
\[
	\beta_{i , \a}(M(G)) = \dim_{K} (\Tor_{i}(M , K_{\cA_E})(G))_{\a}. 
\]
Let $ \operatorname{sum}( \a ) $ denote the sum of the entries of $ \a $. We see that 
\[
	\beta^G_{i, a}(M) = \sum_{\operatorname{sum}(\a) = a} \beta_{i, \a}(M(G)). 
\]
By Theorem \ref{finite_betti}, we have the following immediate result.

\begin{corollary}\label{finite_edge_betti}
	Let $ M $ be a finitely generated edge-graded $ {\cA_E} $-module and fix $ i \geq 0 $. Then, there exists an integer $ n_i $ such that for any graph $ G $,
	\[
		\beta_{i,\a}(M(G)) = 0 \quad \text{whenever} \quad \operatorname{sum}(\a) > n_i.
	\]
\end{corollary}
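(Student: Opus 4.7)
The plan is to deduce this as a direct consequence of Theorem \ref{finite_betti}, using only the non-negativity of dimensions. An edge-graded $A$-module is automatically graded in the coarse sense, with $(M(G))_a$ the direct sum of the pieces $(M(G))_{\a}$ over $\a \in \N^{E(G)}$ with $\operatorname{sum}(\a) = a$. Hence Theorem \ref{finite_betti} applies to $M$ and produces an integer $n_i$ with $\beta^G_{i,a}(M) = 0$ for every graph $G$ and every $a > n_i$.

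The remaining step is to upgrade this vanishing to the finer edge grading. Using the identity
\[
\beta^G_{i,a}(M) = \sum_{\operatorname{sum}(\a) = a} \beta_{i,\a}(M(G))
\]
displayed just before the corollary, each summand is the $K$-dimension of a vector space, hence a non-negative integer. Consequently, the vanishing of the left-hand side forces every term on the right to vanish: for any $\a \in \N^{E(G)}$ with $\operatorname{sum}(\a) > n_i$, taking $a = \operatorname{sum}(\a)$ gives $\beta_{i,\a}(M(G)) = 0$, which is the desired conclusion.

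I do not expect any genuine obstacle here; the real work is already carried out in Theorem \ref{finite_betti}, whose proof rests on the Noetherianity of finitely generated $A$-modules (Theorem \ref{Noeth_algebra}) applied via Lemma \ref{tor_fg} to $\Tor_i(M,K_A)$. The only point deserving a quick sanity check is that the coarse $\Tor$ decomposes compatibly along the edge grading as claimed, which is immediate once one notes that $K_A$ is concentrated in multidegree $0$ and that the projective resolution supplied by Lemma \ref{proj_resolution} can be chosen to respect the $\N^{E(G)}$-grading pointwise; this makes the displayed sum identity hold by definition.
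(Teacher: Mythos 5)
Your argument is exactly the paper's: the paper deduces the corollary immediately from Theorem \ref{finite_betti} together with the displayed identity $\beta^G_{i,a}(M)=\sum_{\operatorname{sum}(\a)=a}\beta_{i,\a}(M(G))$, using non-negativity of the summands just as you do. Your extra sanity check that the edge grading refines the coarse grading on $\Tor_i(M,K_A)$ is a reasonable elaboration of what the paper leaves implicit, but it is not a different route.
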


Let us now consider the case where $ M $ is a finitely generated edge-graded $ {\cA_E} $-module such that for each
graph $ G $, $ M(G) $ is a square free monomial ideal of $ {\cA_E}(G) = K [x_{e} | e \in E(G)] $. In this case,
using the Stanely-Reisner construction, one can associate to $ M(G) $ a simplicial complex $ \Delta_{M}(G) $
on the set $ E(G) $. Namely, the simplices of $ \Delta_{M}(G) $ are given by squarefree monomials of $ {\cA_E}(G) $
not contained in $ M(G) $. Now, let us identify each subset $ \sigma \subseteq E(G) $ with its indicator
vector in $ \N^{E(G)} $ so that each such subset of $ E(G) $ corresponds to a squarefree degree in the
edge-grading of $ {\cA_E}(G) $.  The commutative algebra of $ M(G) $ is intimately linked with the topology and
combinatorics of $ \Delta_{M}(G) $.  See \parencite{Stanley1983} for an in-depth treatment of this relationship. In particular, there is a nice relationship between the edge-graded Betti numbers of $ M(G) $ and the simplicial complex $ \Delta_{M}(G) $ due to Hochster (see \parencite[Corollary 5.12]{MiStu}). 

\begin{theorem}[Hochster's formula]
	The nonzero Betti numbers of $ M(G) $ lie only in square free degrees, namely degrees corresponding to subsets $ \sigma \subset E(G) $. Furthermore,
	\[
		\beta_{i , \sigma}(M(G)) = \dim_{K} \tilde{H}^{|\sigma| - i - 2}( \Delta_M(G) |_{\sigma} ; K)
	\] 
	where $ \Delta_M(G) |_{\sigma} = \{\tau \in \Delta_M(G) \, | \, \tau \subseteq \sigma\} $. 
\end{theorem}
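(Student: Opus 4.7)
The plan is to compute $\Tor_i(M(G), K_A)$ for a fixed graph $G$ by resolving the residue field $K$ using the Koszul complex $F_\bullet$ on the variables $\{x_e\}_{e \in E(G)}$. Since this complex is $\N^{E(G)}$-graded, with the basis element indexed by $\sigma \subseteq E(G)$ sitting in multidegree equal to the indicator vector $\mathbf{1}_\sigma$, the Tor groups inherit an $\N^{E(G)}$-grading and are computed as the homology of $M(G) \otimes_{A(G)} F_\bullet$. First I would reduce to the Stanley-Reisner quotient $A(G)/M(G)$ using the short exact sequence $0 \to M(G) \to A(G) \to A(G)/M(G) \to 0$ and the long exact sequence of $\Tor$: since $A(G)$ is a free $A(G)$-module, its higher Tors with $K_A$ vanish, yielding multigraded isomorphisms $\Tor_i(M(G), K_A) \cong \Tor_{i+1}(A(G)/M(G), K_A)$ for all $i \geq 0$. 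It therefore suffices to establish the analogous identity with $i$ replaced by $i+1$ for the quotient ring.

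Next, I would analyze the multidegree-$\a$ piece of $(A(G)/M(G)) \otimes_{A(G)} F_\bullet$ for arbitrary $\a \in \N^{E(G)}$. Every basis vector in this piece is a monomial of $A(G)/M(G)$ multiplied by a Koszul basis element whose multidegree is squarefree; any excess over that squarefree degree must come from the monomial side. When $\a$ has some entry strictly greater than $1$, such a monomial contains a repeated variable and is therefore killed modulo the squarefree ideal $M(G)$, so the entire multidegree-$\a$ piece vanishes. This yields the first claim of the theorem that Betti numbers are supported only on squarefree multidegrees.

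For $\a = \mathbf{1}_\sigma$ with $\sigma \subseteq E(G)$, the central step is to identify the multidegree-$\mathbf{1}_\sigma$ subcomplex of $(A(G)/M(G)) \otimes_{A(G)} F_\bullet$ with the reduced simplicial cochain complex of $\Delta_M(G)|_\sigma$. Concretely, the homological degree $j$ term has a $K$-basis indexed by faces $\tau \in \Delta_M(G)$ with $\tau \subseteq \sigma$ and $|\tau| = |\sigma| - j$, and the Koszul differential, up to a standard sign convention, becomes the reduced cochain differential of $\Delta_M(G)|_\sigma$ shifted so that faces of codimension $j$ in $\sigma$ sit in homological degree $j$. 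Passing to homology and applying the shift from the first paragraph gives
\[
\Tor_i(M(G), K_A)_\sigma \cong \Tor_{i+1}(A(G)/M(G), K_A)_\sigma \cong \tilde{H}^{|\sigma|-i-2}(\Delta_M(G)|_\sigma; K),
\]
which is the stated formula.

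The main obstacle will be the bookkeeping in the identification of the Koszul differential with the simplicial cochain differential on $\Delta_M(G)|_\sigma$; this requires tracking signs under the bijection sending a wedge $e_\sigma/e_\tau$ to the face $\tau$ and checking that the relations imposed by $M(G)$ cut out precisely the faces of the Stanley-Reisner complex. The remaining ingredients, namely the long exact $\Tor$ sequence and the $\N^{E(G)}$-graded Koszul resolution, are standard, and the complete computation in the form stated appears in \parencite[Corollary 5.12]{MiStu}.
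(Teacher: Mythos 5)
The paper does not actually prove this statement: it is quoted as a classical result of Hochster, with a pointer to \parencite[Corollary 5.12]{MiStu}. So what you are offering is a proof of a cited theorem, and the route you chose (the multigraded strands of the Koszul complex, plus the long exact sequence shift $\Tor_i(M(G),K_A)\cong\Tor_{i+1}(A(G)/M(G),K_A)$, valid since $M(G)$ is a proper ideal contained in $I_E(G)$) is indeed the standard textbook argument. Your treatment of the squarefree strands is correct modulo the sign bookkeeping you flag: in multidegree $\mathbf{1}_\sigma$ the strand of $(A(G)/M(G))\otimes F_\bullet$ has basis indexed by faces of $\Delta_M(G)|_\sigma$ (including $\varnothing$), the Koszul differential becomes the reduced simplicial coboundary, and the degree shift produces the exponent $|\sigma|-i-2$ for the ideal.

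There is, however, a genuine error in your argument for the first assertion (vanishing outside squarefree degrees). You claim that if some entry of $\a$ exceeds $1$, then every coefficient monomial "contains a repeated variable and is therefore killed modulo the squarefree ideal $M(G)$," so the whole strand is zero. Neither half of this is true: a monomial $x^{\mathbf b}$ is zero in $A(G)/M(G)$ if and only if its \emph{support} is a nonface of $\Delta_M(G)$ -- repeated variables do not kill it (e.g.\ $x_e^2\neq 0$ whenever $\{e\}$ is a face, which always happens for $I_{\LC}$ since it is generated by quadrics) -- and moreover when $e\in\tau$ the coefficient monomial in degree $\a$ has exponent $a_e-1$ at $e$, which may equal $1$. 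Concretely, in multidegree $2\mathbf{1}_e$ the strand is $K\cdot(x_e\otimes e_e)\to K\cdot(x_e^2\otimes 1)$, which is nonzero. What is true, and what you need, is that each non-squarefree strand is \emph{exact}: choosing $e$ with $a_e\geq 2$, every coefficient monomial occurring in that strand has $e$ in its support whether or not $e\in\tau$, so membership in $\Delta_M(G)$ is unaffected by $e$ and the map $e_\tau\mapsto e_{\tau\cup\{e\}}$ (for $e\notin\tau$) is a contracting homotopy; equivalently the strand factors as a two-term exact Koszul complex on $x_e$ tensored with another complex. Without this (or an equivalent argument, such as the upper Koszul simplicial complex description in \parencite{MiStu}), the squarefree-concentration claim of the theorem is unproved; with it, the rest of your outline goes through.
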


Hochster's formula together with Corollary \ref{finite_edge_betti} give the following.

\begin{corollary}\label{finite_betti_cohom}
	Fix $ i \geq 0 $. There exists an integer $ n_{i} $ such that for any graph $ G $, if $ \sigma \subseteq E(G) $ with $ | \sigma | > n_{i} $, then 
	\[
		\dim_{K} \tilde{H}^{| \sigma | - i - 2}(\Delta_M(G) |_{\sigma} ; K) = 0.
	\]
\end{corollary}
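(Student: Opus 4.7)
The plan is to obtain this corollary as an immediate consequence of Hochster's formula and Corollary \ref{finite_edge_betti}, with essentially no new content beyond matching up the indexing conventions. First I would invoke Corollary \ref{finite_edge_betti} on the edge-graded $A$-module $M$ to produce the integer $n_i$ with the property that $\beta_{i,\a}(M(G)) = 0$ for every graph $G$ and every $\a \in \N^{E(G)}$ with $\operatorname{sum}(\a) > n_i$. This is legitimate because the hypothesis that $M(G) \subseteq A(G)$ is a squarefree monomial ideal for each $G$ makes $M$ automatically edge-graded.

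Next I would perform the bookkeeping translation between squarefree multidegrees and subsets of $E(G)$. Given $\sigma \subseteq E(G)$ with $|\sigma| > n_i$, identify $\sigma$ with its indicator vector in $\N^{E(G)}$; this is a squarefree multidegree whose coordinate sum equals $|\sigma|$. Since $\operatorname{sum}(\sigma) = |\sigma| > n_i$, the previous paragraph gives $\beta_{i,\sigma}(M(G)) = 0$.

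Finally I would feed this vanishing into Hochster's formula, which asserts
\[
\beta_{i,\sigma}(M(G)) = \dim_K \tilde{H}^{|\sigma| - i - 2}(\Delta_M(G)|_\sigma ; K),
\]
to conclude the statement. I do not expect any genuine obstacle in this proof: the substantive work has already been carried out in Lemma \ref{tor_fg} and Theorem \ref{finite_betti}, which exploit the Noetherian property of finitely generated $A$-modules (Theorem \ref{Noeth_algebra}). The only thing one needs to double-check is that the bound $n_i$ coming from the edge-graded version is uniform in $G$, but this is exactly the content of Corollary \ref{finite_edge_betti}, so the corollary drops out with no further effort.
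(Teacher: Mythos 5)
Your proposal is correct and is exactly the paper's argument: the paper derives this corollary immediately by combining Hochster's formula with Corollary \ref{finite_edge_betti}, identifying a subset $\sigma \subseteq E(G)$ with its squarefree indicator multidegree so that $\operatorname{sum}(\sigma) = |\sigma| > n_i$ forces $\beta_{i,\sigma}(M(G)) = 0$. Your bookkeeping and the observation that the uniformity in $G$ is already supplied by Corollary \ref{finite_edge_betti} match the intended reasoning, so there is nothing to add.
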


As an application of the above results, we first define a specific $ {\cA_E} $-module which will be denoted $
I_{\LC} $. For a graph $ G $, the \textit{edge ideal} of $ G $ is the ideal $ I \subseteq K [x_v | v \in V(G)]
$ generated by monomials $ x_vx_w $ where $ v $ and $ w $ are connected by an edge in $ E(G) $. Given a graph
$ G $, define the \textbf{line graph of $ G $} denoted $ \mathcal{L}(G) $ to be the simple graph whose
vertices are the the edges of $ G $ and two vertices of $ \mathcal{L}(G) $ are adjacent if and only if they
share a vertex in $ G $. Then, define the \textbf{complement line graph of $G$}, denoted $ \LC(G) $, to be the
complement of $ \mathcal{L}(G) $. Namely, the vertices of $ \LC(G) $ are the edges of $ G $ and two vertices
of $ \LC(G) $ are adjacent if and only if they do not share a vertex in $ G $.

\begin{example}
If $G$ is the star graph - i.e. the tree with one vertex of degree $n$ and all other vertices of degree 1 -  then $\LC(G)$ is immediately seen to be a disjoint collection of points. On the other hand, if $G$ is the complete graph $K_n$, then $\LC(G)$ is the Kneser graph $K(n,2)$. indeed, certain authors refer to line graph complements as being generalized Kneser graphs for this reason \parencite{DaMeuMi}.
\end{example}

Now, let $ I_{\LC} $ be the $ {\cA_E} $-module taking a graph $ G $ to the edge ideal of $ \LC(G) $.  We see that \[
	I_{\LC}(G) = \langle x_{e} x_{f} | e, f \in E(G)  \text{ don't share a vertex} \rangle \subseteq {\cA_E}(G) .
\] Indeed, $ I_{\LC}(G) $ is a square free monomial ideal of $ {\cA_E}(G) $. Furthermore, if $ \phi : G \to G'
$ is a minor morphism, and $ e, f \in E(G) $ don't share a vertex, then $ \phi^*(e), \phi^*(f) \in E(G ')$
don't share a vertex so $ I_{\LC} $ does, in fact, give an $ {\cA_E} $-module of square free monomial ideals. 

To see that $ I_{\LC} $ is a finitely generated $ {\cA_E} $-module, note that it is a submodule of the edge ideal $
I_{E} $. We see $ I_{E} $ is finitely generated by the graphs $ G_1 $ and $ G_2 $, where $ G_1 $ consists of
two vertices and a single edge connecting them and the graph $ G_2 $ consists of a single vertex and a loop at
that vertex. This is because for any graph $ G $ and any edge $ e \in E(G) $, we can find a minor morphism  $
G \to G_i $ for some $ i \in \{1,2\} $ such that $ e $ gets sent to the single edge in $ G_i $. Thus, the
Noetherian property in Theorem \ref{thm:grobpair} tells us that $ I_{\LC} $ is a finitely generated $ {\cA_E} $-module.

Applying Corollary \ref{finite_edge_betti} for any fixed $ i $, we immediately get the existence of a bound on the degrees of the nonzero edge-graded Betti numbers $ \beta_{i , \a}(I_{\LC}(G)) $ that is uniform as $ G $ ranges over all graphs. 
Moreover, Corollary \ref{finite_betti_cohom} tells us about the cohomology of some subcomplexes of the
simplicial complexes $ \Delta_{I_{\LC}}(G) $ as $ G $ ranges over all graphs. We note that $\Delta_{I_{\LC}}(G)$ is precisely the flag complex - or clique complex - of the line graph $\mathcal{L}(G)$.

\subsection{Linear subspace arrangements of line graph complements}

In this section we study the cohomology of a certain family of hyperplane arrangement complements. In particular, we prove a finite generation result that recovers and expands upon similar results present in \parencite{FWW}.

Let $ G = (V,E) $ be a graph, let $ d $ be a positive integer, and let $ K $ be either $ \C $ or $ \R $. For
each $ e \in E $, if $ v ,w \in V $ are the endpoints of $ e $, let $ W_e \subset (K^d)^V $ be the subspace 
\[
	W_e = \{x \in (K^d)^{V} \, | \, x_v - x_w = 0\}.
\]
The collection of all $ W_e $ as $ e $ ranges over the edges of $ G $ is a subspace arrangement called the
\textbf{graphical arrangement of $ G $} denoted $ \mathscr A(G )$. In the case where $ d=1 $, each $ W_e $ is a
hyperplane of $ K^V $. In general, $ W_e $ is a codimension $ d $ subspace of $ (K^d)^{V} $.  Now, let $
\Conf(G, K^d) = (K^{d})^{V} - \mathscr A(G) $ be the space obtained by removing the subspaces of $ \mathscr A(G) $. In the
case where $ G $ is the complete graph $ K_m $, $ \Conf(G , K^{d}) $ is the usual ordered
configuration space of $ m $ points in $ K^{d} $. 

To each graph $ G $, instead of assigning to it the space $ \Conf (G, K^{d}) $, we will assign to it the space
$ \Conf(\LC(G) , K^{d}) $. We see that if $ \phi : G \to G'$ is a minor morphism, then the inclusion $ \phi^*:
E(G') \to E(G) $ is an inclusion $ V(\LC(G')) \to V(\LC(G)) $. Thus, $ \phi^* $ induces a natural projection 
\[
	\overline{\phi} : (K^{d})^{V (\LC(G))} \to (K^{d})^{V(\LC(G'))} 
\] 
Furthermore, $ \phi^* $ preserves pairs of edges that do not share a vertex. Namely, if $ e_1' ,e_2' \in E(G')
$ do not share a vertex, then $ \phi^*(e_1 ') $ and $ \phi^*(e_2 ') $ do not share a vertex. This observation
yields a natural inclusion $ E(\LC(G')) \to E(\LC(G)) $. Thus, if $ x \in \Conf (\LC(G) , K^d) $, then $
\overline{\phi}(x) \in \Conf(\LC(G), K^{d}) $ so $ \overline{\phi} $ restricts to a map
\[
	\Conf(\LC(G') , K^d) \to \Conf(\LC(G) , K^{d}).
\] 
The construction of the above map is functorial with respect to compositions of minor morphisms. Thus, we have
a functor from $ \cG $ to the category of topological spaces sending a graph $ G $ to $ \Conf(\LC(G), K^d) $.
Then, taking the cohomology ring with $ \Z$ coefficients, gives a functor $ \cGop_{\leq g} $ to the category of abelian groups.

\begin{theorem}
	For any $ i $, the functor $H^{i} (\Conf(\LC(-) , \C^d) ; \Z)$ from $ \cGop_{\leq g} $ to the category of
	abelian groups taking a graph $ G $ to $ H^{i} (\Conf(\LC(G) , \C^d) ; \Z) $ is a finitely generated $ \cGop
_{\leq g}$-module.
\end{theorem}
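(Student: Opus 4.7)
The plan is to adapt the strategy of Theorem \ref{fghommatch}, realizing $H^i(\Conf(\LC(G),\C^d);\Z)$ as a $\cGop$-subquotient of a tensor power of the edge module $M_E$ and then invoking the Noetherian property (Theorem \ref{Noeth}).

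First, I would isolate the $\cGop$-module of disjoint edge pairs. Define $N$ by $N(G) = \Z^{E(\LC(G))}$, the free abelian group on unordered pairs $\{e,f\}$ of non-adjacent edges of $G$. For any minor morphism $\phi:G'\to G$, the induced inclusion $\phi^*:E(G)\hookrightarrow E(G')$ preserves non-adjacency (the same observation used earlier in the section to define the map on $\LC$), so the rule $\{e,f\}\mapsto\{\phi^*e,\phi^*f\}$ realizes $N$ as a $\cGop$-submodule of $\bigwedge^2 M_E$. Since $M_E^{\otimes 2}$ is finitely generated by Lemma \ref{fg_edge}, Theorem \ref{Noeth} implies that $N$, and more generally $N^{\otimes k} \subseteq M_E^{\otimes 2k}$, is finitely generated for every $k\geq 0$.

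Next I would express $H^i(\Conf(\LC(G),\C^d);\Z)$ as a $\cGop$-subquotient of $N^{\otimes k}$ for some $k = k(i,d)$, by choosing an explicit cochain-level model for the cohomology built combinatorially from the edges of $\LC(G)$. In the case $d=1$, one uses the Orlik--Solomon algebra of the hyperplane arrangement $\mathcal{A}_{\LC(G)}$, whose degree-$k$ part is a quotient of $\bigwedge^k N(G)$ by the Arnold/broken-circuit relations. For $d\geq 2$, one uses a cochain-level model for complements of complex subspace arrangements of constant codimension $d$ -- for instance the De Concini--Procesi wonderful model, or the Bj\"orner--Ziegler cellular model refining the Goresky--MacPherson decomposition -- whose cochain groups are again built from combinatorial data on the edges of $\LC(G)$ and hence live inside some tensor power of $N$. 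In either case the cohomology is a $\cGop$-subquotient of $M_E^{\otimes 2k}$, and Theorem \ref{Noeth} finishes the proof.

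The hard part will be verifying the $\cGop$-functoriality of the chosen cochain model, i.e.\ checking that it is natural with respect to the forgetful map $\Conf(\LC(G'),\C^d)\to\Conf(\LC(G),\C^d)$ constructed earlier in the section. For $d=1$ this is essentially immediate from the naturality of the Orlik--Solomon construction under maps of arrangements induced by inclusions of hyperplanes. For $d\geq 2$ it takes more care: one must check that the combinatorial cochain complex used to compute the cohomology is functorial under the maps $\phi^*$ on edge sets of $\LC(G)$ arising from minor morphisms, which is the analogue for cellular chains of the naturality argument that appeared in the matching complex case. Once this functoriality is pinned down, the rest of the argument is a formal application of the Noetherian property.
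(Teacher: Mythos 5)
Your overall reduction is the same as the paper's: view $\Z^{E(\LC(-))}$ as a $\cGop$-submodule of $M_E^{\otimes 2}$ spanned by non-adjacent edge pairs, exhibit the cohomology in each fixed degree as a subquotient of a bounded tensor power of this module, and finish with Lemma \ref{fg_edge} and Theorem \ref{Noeth}, exactly as in the matching-complex argument of Theorem \ref{fghommatch}. The $d=1$ case via the Orlik--Solomon algebra is also consistent with what the paper says in its closing remark.

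The genuine gap is the case $d\geq 2$, which is the heart of the theorem and which you leave as a promissory note. You need a \emph{functorial} combinatorial description of $H^*(\Conf(\LC(G),\C^d);\Z)$ whose degree-$i$ piece sits inside $N^{\otimes k(i,d)}$ compatibly with the maps induced by minor morphisms, and the tools you propose do not obviously supply it. The De Concini--Procesi wonderful model depends on a choice of building set, and the maps in play here are coordinate projections $(\C^d)^{V(\LC(G))}\to(\C^d)^{V(\LC(G'))}$ between different ambient spaces rather than inclusions of arrangements in a fixed space, so naturality of the model (and of any cellular structure on it) is not automatic. Likewise, the Bj\"orner--Ziegler/Goresky--MacPherson description expresses the cohomology as a sum over flats of the intersection lattice; flats involve unboundedly many vertices of $\LC(G)$, the decomposition is not obviously natural for these forgetful maps, and it is not clear how to bound the resulting groups inside a fixed tensor power of $N$ with exponent depending only on $i$ and $d$. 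The paper resolves precisely this point with a single input you do not mention: the presentation of de Longueville and Schultz \parencite{DeLoungSchultz} for the integral cohomology ring of the complement of an arrangement of subspaces of constant codimension. That presentation has generators indexed exactly by $E(\LC(G))$, placed in degree $r-1$ (with $r=2d$ in the complex case), and relations indexed by cycles of $\LC(G)$ together with a sign determined by an orientation of the ambient space; since $\phi^*$ preserves non-adjacency it carries cycles to cycles and allows compatible orientations, so the presentation is visibly natural, and each graded piece is a quotient of a tensor power of $\Z^{E(\LC(G))}$. Without this (or some equally explicit, degreewise-bounded and functorial model), your argument for $d\geq 2$ does not go through as written.
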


\begin{proof}
	For a graph $ G $, we know that $ \Conf(\LC(G) , K^d) $ is the complement of a
	subspace arrangement where each subspace has real codimension $ r $ where $ r=d $ when $ K=\R $ and $ r=2d-1 $
	when $ K = \C $. In \parencite[Corollary 5.6]{DeLoungSchultz}, de Loungueville and Schultz give a presentation for the
	cohomology ring of the complement of a subspace arrangement over $ \R $ where each subspace has the same
	codimension. This result tells says that the cohomology ring $ H^*(\Conf(\LC(G) , K^d) ; \Z) $ has the
	following presentation: 
	\[
		H^{*}(\Conf(\LC(G) , K^{d}) ; \Z) \cong \wedge^* \Z^{E(\LC(G))} / I 
	\] when $ r $ is even and 
	\[
		H^{*}(\Conf(\LC(G) , K^{d}) ; \Z) \cong \operatorname{Sym}^* \Z^{E(\LC(G))} / I 
	\] when $ r $ is odd. Moreover, if $ e \in E(\LC(G)) $, then under this isomorphism $ e \in
	H^{r-1}(\Conf(\LC(G) , K^{d})) $. If $ r $ is even, $ I $ is generated by 
	\[
		\sum_{i = 0}^{k} (-1)^{i} \epsilon(e_{j_1} , \dots, \hat{e}_{j_{i}} , \dots , e_{k_{i}}) e_{j_0} \wedge
		\dots \wedge \widehat{e}_{j_i} \wedge \dots \wedge e_{j_{k}} 
	\]
	and if $ r $ is odd, $ I $ is generated by 
	\[
		e^2 \quad \text{and} \quad \sum_{i = 0}^{k} (-1)^{i} \epsilon(e_{j_1} , \dots, \hat{e}_{j_{i}} , \dots
		, e_{k_{i}}) e_{j_0} \dots \widehat{e}_{j_i}\dots e_{j_{k}} 
	\]
	for all $ e \in E(\LC(G)) $ and all sets $ \{ e_{a_{0}} , \dots , e_{a_{k}}\} \subset E(\LC(G)) $ that form
	a cycle. Here, $\epsilon(e_{j_1} , \dots, \hat{e}_{j_{i}} , \dots , e_{k_{i}})$ is a sign coming from a choice of
	orientation on the ambient real vector space. 

	We know cycles of $ \LC(G) $ correspond to sets of edges $ \{f_1 , \dots, f_{k}\} \subset E(G) $ such that $
	f_i$ and $ f_{i+1} $ do not share a vertex and $ f_1 $ and $ f_k $ do not share a vertex. Given a minor
	morphism $\phi: G \to G' $, the inclusion $ \phi^* : E(G') \to E(G) $ preserves pairs of edges that don't
	share a vertex, and so the induced map $ E(\LC(G')) \to E(\LC(G)) $ sends cycles to cycles. Furthermore, the
	inclusion $ \phi^* : E(G') \to E(G) $  gives an inclusion of the ambient real vector space for $ \Conf(E(G'),
	K^d) $ into the ambient real vector space for $ \Conf(E(G) , K^{d}) $ which in particular, allows us to pick
	compatible orientations on these ambient vector spaces.  Thus, the presentation for $ H^*(\Conf(\LC(G)) ,
	\C^d);\Z) $ is compatible minor morphisms. Thus, we see that for each $ i $, $ H^{ir}(\Conf(\LC(-) , \C^d) ;
	\Z) $ as $ \cGop_{\leq g} $-module, is a quotient of the $ i $th tensor power of $ \Z^{E(\LC(G))} $,
	\[
		T^i \Z^{E(\LC(-))}.
	\]
	Finally, we notice that $ \Z^{E(\LC(-))} $ is the submodule of the second tensor power of the edge module $
	M_{E}^{\otimes 2} $, where $ \Z^{E(\LC(G))} $ generated by elements of the form $ e_{i} \otimes e_{j} $ where
	$ e_{i} , e_{j} \in E(G) $ do not share a vertex. Thus, by Theorem \ref{thm:weakcatgraphminor}, we have the desired result.
\end{proof}

\begin{remark}
	If $ d=1 $, $ \Conf(\LC(G), K) $ is the complement of a hyperplane arrangement in $ K^V $.
	When $ K = \C $, $ H^{*}(\Conf(\LC(G) , K)) $ is just the Orlik-Solomon algebra of the
	complex hyperplane arrangement where the generators live in degree 1. If $ d > 1 $, the above presentation shows
	that $ H^{*}(\Conf(\LC(G) , K^{d})) $ is still isomorphic to the aforementioned Orlik-Solomon algebra, only
	now the generators live in degree $ 2d - 1 $.
	When $ K=\R $, $ H^*(\Conf(\LC(G) , K)) $ is the Cordovil algebra of the real hyperplane arrangement. If $ d $ is
	odd, then $ H^*(\Conf(\LC(G) , K^{d})) $ is isomorphic to $ H^*(\Conf(\LC(G) , K)) $, only now the
	generators live in degree $ d-1 $.
\end{remark}

\begin{example}
Let $a,b \geq 1$ and let $G$ be the graph with two vertices of degrees $a+1$ and $b+1$, respectively, connected to one another by a single edge. Put another way, $G$ is two copies of star graphs, of degrees $a$ and $b$, glued together by an edge. Then $\LC(G)$ is easily seen to be a complete bipartite graph $K_{a,b}$, disjoint union a point. Then we have
\[
\Conf(\LC(G) , \C^{d}) = \widetilde{\mathcal{Z}}_{a+b}^D(\C^d) \times \C^d
\]
where $\widetilde{\mathcal{Z}}_{a+b}^D(\C^d)$ are the colored configuration spaces considered by Farb Wolfson and Wood in \parencite{FWW}, with $D$ being the vertices of the complete bipartite graph colored in the obvious way. Observe moreover that the graph $G$ can be seen as an edge with $a$ and $b$ leaves sprouted on its two vertices, respectively. Therefore, Theorem \ref{basicallyOI} implies that our result can be seen as a generalization of the stabilization phenomena observed by \parencite{FWW}.
\end{example}

\printbibliography

\end{document}